\newtheorem{Teo}{Theorem}[section]
\newtheorem{Prop}[Teo]{Proposition}
\newtheorem{Lema}[Teo]{Lemma}
\newtheorem{Cor}[Teo]{Corollary}
\theoremstyle{definition}
\newtheorem{Def}[Teo]{Definition}
\newtheorem{Obs}[Teo]{Remark}
\newtheorem{Exa}[Teo]{Example}
\newcommand{\R}{\mathbb{R}}
\newcommand{\N}{\mathbb{N}}
\newcommand{\Llr}{\Longleftrightarrow}
\newcommand{\Ch}{\mbox{\rm char}}
\begin{document}
\title{Key polynomials and pseudo-convergent sequences}
\author{Josnei Novacoski}
\author{Mark Spivakovsky}

\keywords{Key polynomials, Pseudo-convergent sequences, Valuations}
\subjclass[2010]{Primary 13A18}
\begin{abstract}
In this paper we introduce a new concept of key polynomials for a given valuation $\nu$ on $K[x]$. We prove that such polynomials have many of the expected properties of key polynomials as those defined by MacLane and Vaqui\'e, for instance, that they are irreducible and that the truncation of $\nu$ associated to each key polynomial is a valuation. Moreover, we prove that every valuation
$\nu$ on $K[x]$ admits a sequence of key polynomials that completely determines $\nu$ (in the sense which we make precise in the paper). We also establish the relation between these key polynomials and pseudo-convergent sequences defined by Kaplansky.
\end{abstract}

\maketitle
\section{Introduction}
Given a valuation $\nu$ of a field $K$, it is important to understand what are the possible extensions of $\nu$ to $K[x]$. Many different theories have been developed in order to understand such extensions. For instance, in \cite{Mac}, MacLane develops the theory of key polynomials. He proves that given a discrete valuation $\nu$ of $K$, every extention of $\nu$ to $K[x]$ is uniquely determined by a sequence (with order type at most $\omega$) of key polynomials. Recently, M. Vaqui\'e developed a more general theory of key polynomials (see \cite{Vaq}), which extends the results of MacLane for a general valued field (that is, the given valuation of $K$ is no longer assumed to be discrete). At the same time, F.H. Herrera Govantes, W. Mahboub, M.A. Olalla Acosta and M. Spivakovsky developed another definition of key polynomials (see \cite{HOS}). This definition is an adaptation of the concept of generating sequences introduced by Spivakovsky in \cite{Spi1}. A comparison between this two definitions of key polynomials is presented in \cite{Mahboud}.

Roughly speaking, for a given valuation $\mu$ of $K[x]$, a MacLane -- Vaqui\'e key polynomial $\phi\in K[x]$ for $\mu$ is a polynomial that allows us to obtain a new valuation $\mu_1$ of $K[x]$ with $\mu_1(\phi)=\gamma_1>\mu(\phi)$ and $\mu(p)=\mu_1(p)$ for every $p\in K[x]$ with $\deg(p)<\deg(\phi)$ (in this case we denote $\mu_1$ by $[\mu;\mu_1(\phi)=\gamma_1]$). Then, for any valuation
$\nu$ of $K[x]$ one tries to obtain a sequence of valuations $\mu_0,\mu_1,\ldots,\mu_n,\ldots$ with $\mu_0$ a monomial valuation and $\mu_{i+1}=[\mu_i;\mu_{i+1}(\phi_{i+1})=\gamma_{i+1}]$ for a key polynomial $\phi_{i+1}$ for $\mu_i$, such that
\begin{equation}
\nu=\lim \mu_i\label{eq:nu=lim}
\end{equation}
(in the sense that will be defined precisely below). This process does not work in general, that is, the equality (\ref{eq:nu=lim}) may not hold even after one constructs an infinite sequence $\{\mu_i\}$.  This leads one to introduce the concept of ``limit key polynomial".
It is known that valuations which admit limit key polynomials are more difficult to handle. For instance, it was proved by J.-C. San Saturnino (see Theorem 6.5 of \cite{JCSS}), that if a valuation $\nu$ is centered on a noetherian local domain and $\nu$ does not admit limit key polynomials (on any sub-extension $R'\subseteq R'[x]\subseteq R$ with $\dim\ R'=\dim\ R-1$), then it has the local uniformization property (where we assume, inductively, that local uniformization holds for $R'$).

In this paper, we introduce a new concept of key polynomials. Let $K$ be a field and $\nu$ a valuation on $K[x]$. Let $\Gamma$ denote the value group of $K$ and $\Gamma'$ the value group of $K[x]$. For a positive integer $b$, let
$\partial_b:=\frac{1}{b!}\frac{\partial^b}{\partial x^b}$ (this differential operator of order $b$ is sometiems called \textbf{the $b$-th formal derivative}). For a polynomial $f\in K[x]$ let
\[
\epsilon(f)=\max_{b\in \N}\left\{\frac{\nu(f)-\nu(\partial_bf)}{b}\right\}.
\]
A monic polynomial $Q\in K[x]$ is said to be a \textbf{key polynomial} (of level $\epsilon (Q)$) if for every $f\in K[x]$ if $\epsilon(f)\geq \epsilon(Q)$, then $\deg(f)\geq\deg(Q)$.

This new definition offers many advantages. For instance, it gives a criterion to determine, for a given valuation $\nu$ of $K[x]$, whether any given polynomial is a key polynomial for $\nu$. This has a different meaning than in the approach of MacLane-Vaqui\'e. In their approach, a key polynomial allows us to ``extend the given valuation" and here a key polynomial allows us to ``truncate the given valuation". For instance, our definition of key polynomials treats the limit key polynomials on the same footing as the non-limit ones. Moreover, we present a characterization of key polynomial (Theorem \ref{definofkeypol}) which allows us to determine whether a given key polynomial is a limit key polynomial. A more precise comparison between the concept of key polynomial introduced here and that of MacLane -- Vaqui\'e will be explored in a forthcoming paper by Decoup, Mahboub and Spivakovsky.

Given two polynomials $f,q\in K[x]$ with $q$ monic, we call the \textbf{$q$-standard expansion of $f$} the expression
\[
f(x)=f_0(x)+f_1(x)q(x)+\ldots+f_n(x)q^n(x)
\]
where for each $i$, $0\leq i\leq n$, $f_i=0$ or $\deg(f_i)<\deg(q)$. For a polynomial $q(x)\in K[x]$, the \textbf{$q$-truncation of $\nu$} is defined as
\[
\nu_q(f):=\min_{0\leq i\leq n}\{\nu(f_iq^i)\}
\]
where $f=f_0+f_1q+\ldots+f_nq^n$ is the $q$-standard expansion of $f$. In Section 2, we present an example that shows that $\nu_q$ does need to be a valuation. We also prove (Theorem \ref{proptruncakeypolval}) that if $Q$ is a key polynomial, then $\nu_Q$ is a valuation. A set $\Lambda$ of key polynomials is said to be a \textbf{complete set of key polynomials for $\nu$} if for every $f\in K[x]$, there exists $Q\in \Lambda$ such that $\nu_Q(f)=\nu(f)$. One of the main results of this paper is the following:

\begin{Teo}\label{Theoremexistencecompleteseqkpol}
Every valuation $\nu$ on $K[x]$ admits a complete set of key polynomials.
\end{Teo}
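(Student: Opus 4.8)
The plan is to construct a complete set of key polynomials by a transfinite recursion on the degree of the polynomials involved, using the invariant $\epsilon$ to control the process. First I would handle the base case: the polynomial $Q_0 = x - a$ for a suitable $a \in K$ (or $Q_0 = x$ if $K$ contributes nothing), which is trivially a key polynomial of the smallest possible level, and its truncation $\nu_{Q_0}$ computes $\nu$ correctly on all polynomials of degree $0$ in $x$, i.e. on $K$. More generally, at each stage I want a key polynomial $Q$ such that $\nu_Q(f) = \nu(f)$ for all $f$ with $\deg(f) < \deg(Q)$; the set $\Lambda$ of \emph{all} such $Q$ obtained along the way will be the candidate complete set.

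The recursive step is the heart of the argument. Suppose $Q \in \Lambda$ has been constructed and $\nu_Q$ agrees with $\nu$ below $\deg(Q)$ but $\nu_Q \neq \nu$ (otherwise we are done, or we continue). Then there exists $f$ with $\nu_Q(f) < \nu(f)$, and I would choose such an $f$ of \emph{minimal degree}; call its degree $d \geq \deg(Q)$. The key claim is that among all monic polynomials $Q'$ of degree $\le d$ with $\nu_{Q'}$ still agreeing with $\nu$ below $\deg(Q')$, one can select $Q'$ of degree exactly $d$ which is a key polynomial, i.e. which maximizes (or at least suffices to bound) $\epsilon$ in the appropriate sense among polynomials of that degree. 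Concretely, I expect to take $Q'$ to be a monic polynomial of degree $d$ maximizing $\epsilon(Q')$; the definition of key polynomial then forces any $f$ with $\epsilon(f) \ge \epsilon(Q')$ to have degree $\ge d = \deg(Q')$, so $Q'$ is a key polynomial. One then must verify that $\nu_{Q'}$ agrees with $\nu$ on all polynomials of degree $< d$: this should follow because such polynomials have $\epsilon$ strictly below $\epsilon(Q')$ — the same mechanism that made $Q'$ a key polynomial — combined with the already-established agreement of $\nu_Q$ below $\deg(Q)$ and an argument comparing $Q$-expansions with $Q'$-expansions. Since $Q'$ is a key polynomial, Theorem \ref{proptruncakeypolval} gives that $\nu_{Q'}$ is genuinely a valuation.

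To see that the resulting set $\Lambda$ is complete, I would argue by contradiction: if some $f$ had $\nu_Q(f) < \nu(f)$ for \emph{every} $Q \in \Lambda$, take such an $f$ of minimal degree $d$; the construction above produces a key polynomial $Q'$ of degree $\le d$ (in fact one should arrange $\deg Q' \le d$ and $\nu_{Q'}$ correct below $\deg Q'$) with $\nu_{Q'}(f) = \nu(f)$, contradicting $f$'s choice — here one uses minimality of $d$ to handle the lower-degree $Q'$-coefficients of $f$, since those are computed correctly by $\nu_{Q'}$, hence their $\nu_{Q'}$-values equal their $\nu$-values, and the $Q'$-truncation formula then reproduces $\nu(f)$ exactly. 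The transfinite nature of the recursion is controlled because $\deg(Q)$ is a nonnegative integer and $\epsilon(Q)$ increases strictly at each essential step, so along polynomials of any fixed degree the process terminates; passing between degrees only finitely many jumps are needed to reach any given target degree $d$.

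The main obstacle I anticipate is the verification that the newly chosen $Q'$ still computes $\nu$ correctly on \emph{all} polynomials of degree $< \deg(Q')$, not merely below $\deg(Q)$; bridging this gap requires relating the $Q$-standard expansion and the $Q'$-standard expansion of a given polynomial, and showing that the minimality in the choice of $f$ (the minimal-degree witness of $\nu_Q \neq \nu$) transfers correctly. A secondary delicate point is ensuring that the supremum defining "maximal $\epsilon$ among monic polynomials of degree $d$" is actually attained (or that non-attainment can be circumvented), which is precisely where limit key polynomials enter and where the characterization in Theorem \ref{definofkeypol} should be invoked.
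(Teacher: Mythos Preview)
Your overall plan is close to the paper's, but you misidentify where the work lies. The claim that $\nu_{Q'}(f)=\nu(f)$ for all $f$ with $\deg(f)<\deg(Q')$ is \emph{trivial}: the $Q'$-standard expansion of such an $f$ is the single term $f_0=f$, so $\nu_{Q'}(f)=\nu(f)$ directly from the definition. No comparison of $Q$- and $Q'$-expansions is needed, and $\epsilon$ plays no role here. Consequently your ``main obstacle'' evaporates, and completeness is immediate once $\Lambda$ contains key polynomials of arbitrarily large degree: given any $f$, pick $Q\in\Lambda$ with $\deg(Q)>\deg(f)$.

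The genuine difficulty is the one you flag at the end but do not resolve: ensuring the degrees strictly increase. Your claim that ``along polynomials of any fixed degree the process terminates'' because $\epsilon$ strictly increases is false in general: an infinite strictly increasing sequence of $\epsilon$-values can certainly occur. Concretely, if $\alpha(Q)=\deg(Q)$ and $\{\nu(Q')\mid Q'\in\Psi(Q)\}$ has no maximum, then picking any single $Q'\in\Psi(Q)$ keeps $\deg(Q')=\deg(Q)$ and you may loop at that degree forever. Invoking Theorem~\ref{definofkeypol} does not by itself fix this: that theorem characterizes key polynomials but does not hand you the next one. The paper's resolution is to adjoin to $\Lambda$ an entire cofinal family $\{Q_\gamma\}$ drawn from $\Psi(Q)$; either this family already forms a complete set (for every $f$ some $\nu_{Q_\gamma}(f)=\nu(f)$, and one stops), or there is a monic polynomial of least degree not handled by any $Q_\gamma$, and \emph{that} polynomial is a limit key polynomial of degree strictly greater than $\deg(Q)$. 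With this dichotomy in hand the distinguished $Q_i$ satisfy $\deg(Q_0)<\deg(Q_1)<\cdots$, and completeness follows from the trivial observation in the first paragraph.
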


Another way of describing extensions of valuations from $K$ to $K[x]$ is the theory of pseudo-convergent sequences developed by Kaplansky in \cite{Kap}. He uses this theory to determine whether a maximal immediate extension of the valued field $(K,\nu)$ is unique (up to isomorphism). For a valued field $(K,\nu)$, a \textbf{pseudo-convergent sequence} is a well-ordered subset $\{a_{\rho}\}_{\rho<\lambda}$ of $K$, without last element, such that
\[
\nu(a_\sigma-a_\rho)<\nu(a_\tau-a_\sigma)\mbox{ for all }\rho<\sigma<\tau<\lambda.
\]
For a given pseudo-convergent sequence $\{a_{\rho}\}_{\rho<\lambda}$ it is easy to show that either $\nu(a_\rho)<\nu(a_\sigma)$ for all $\rho<\sigma<\lambda$ or there is $\rho<\lambda$ such that $\nu(a_\sigma)=\nu(a_\rho)$ for every $\rho<\sigma<\lambda$. If we set $\gamma_\rho:=\nu(a_{\rho+1}-a_\rho)$, then $\nu(a_\sigma-a_\rho)=\gamma_\rho$ for every $\rho<\sigma<\lambda$. Hence, the sequence $\{\gamma_\rho\}_{\rho<\lambda}$ is an increasing subset of $\Gamma$. An element $a\in K$ is said to be a \textbf{limit} of the pseudo-convergent sequence $\{a_\rho\}_{\rho<\lambda}$ if $\nu(a-a_\rho)=\gamma_\rho$ for every
$\rho<\lambda$.

One can prove that for every polynomial $f(x)\in K[x]$, there exists $\rho_f<\lambda$ such that either
\begin{equation}\label{condforpscstotra}
\nu(f(a_\sigma))=\nu(f(a_{\rho_f}))\mbox{ for every }\rho_f\leq \sigma<\lambda,
\end{equation}
or
\begin{equation}\label{condforpscstoalg}
\nu(f(a_\sigma))>\nu(f(a_{\rho}))\mbox{ for every }\rho_f\leq \rho< \sigma<\lambda.
\end{equation}
If case (\ref{condforpscstotra}) happens, we say that the value of $f$ is fixed by $\{a_\rho\}_{\rho<\lambda}$ (or that $\{a_\rho\}_{\rho<\lambda}$ fixes the value of $f$). A pseudo-convergent sequence $\{a_\rho\}_{\rho<\lambda}$ is said to be of \textbf{transcendental type} if for every polynomial $f(x)\in K[x]$ the condition (\ref{condforpscstotra}) holds.
Otherwise, $\{a_\rho\}_{\rho<\lambda}$ is said to be of \textbf{algebraic type}, i.e., if there exists at least one polynomial for which condition (\ref{condforpscstoalg}) holds.

The concept of key polynomials appears in the approach to local uniformization by Spivakovsky.
On the other hand, the concept of pseudo-convergent sequence plays an important role in the work of Knaf and Kuhlmann (see \cite{KK_1}) on the local uniformization problem. In this paper, we present a comparison between the concepts of key polynomials and pseudo-convergent sequences. More specifically, we prove the following:

\begin{Teo}\label{compthemkppsc}
Let $\nu$ be a valuation on $K[x]$ and let $\{a_\rho\}_{\rho<\lambda}\subset K$ be a pseudo-convergent sequence, without a limit in $K$, for which $x$ is a limit. If $\{a_\rho\}_{\rho<\lambda}$ is of transcendental type, then $\Lambda:=\{x-a_\rho\mid \rho<\lambda\}$ is a complete set of key polynomials for $\nu$. On the other hand, if $\{a_\rho\}_{\rho<\lambda}$ is of algebraic type, then every polynomial $q(x)$ of minimal degree among the polynomials not fixed by $\{a_\rho\}_{\rho<\lambda}$ is a limit key polynomial for $\nu$.
\end{Teo}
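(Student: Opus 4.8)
The plan is to treat the two cases separately, in each case using the defining inequalities of a pseudo-convergent sequence together with the key invariant $\epsilon$.

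First consider the transcendental case. For $\rho<\lambda$ set $Q_\rho:=x-a_\rho$, a monic polynomial of degree $1$; note $\partial_1 Q_\rho=1$ and $\partial_b Q_\rho=0$ for $b\ge 2$, so $\epsilon(Q_\rho)=\nu(Q_\rho)-\nu(1)=\nu(x-a_\rho)=\gamma_\rho$. Since degree-$1$ polynomials have the smallest possible degree among non-constant monic polynomials, each $Q_\rho$ is trivially a key polynomial in the sense defined above. It remains to prove completeness: given $f\in K[x]$, I must produce $\rho$ with $\nu_{Q_\rho}(f)=\nu(f)$. First I would recall that $x$ is a limit of $\{a_\rho\}$, which I read as $\nu(x-a_\rho)=\gamma_\rho$ for all $\rho$, hence $\nu(f)=\nu(f(x))$ can be compared with the Taylor expansion of $f$ around $a_\rho$: writing $f=\sum_i f_i(a_\rho)(x-a_\rho)^i$ with $f_i=\partial_i f$, this is precisely the $Q_\rho$-standard expansion, so $\nu_{Q_\rho}(f)=\min_i\{\nu(\partial_i f(a_\rho))+i\gamma_\rho\}$. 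The inequality $\nu_{Q_\rho}(f)\le\nu(f)$ always holds (the value of a sum is at least the minimum of the values of the summands). For the reverse inequality I would use that the sequence is of transcendental type: for $i\ge 1$ the value $\nu(\partial_i f(a_\rho))$ is eventually constant, equal to some $\delta_i$, while $\gamma_\rho\to\infty$ cofinally in the relevant sense, so for $\rho$ large enough the term $i=0$, namely $\nu(f(a_\rho))=\nu(f)$ (also eventually constant, since $f$ too has its value fixed), is strictly the unique minimum among $\nu(\partial_i f(a_\rho))+i\gamma_\rho$ for $i\ge 1$; hence $\nu_{Q_\rho}(f)=\nu(f(a_\rho))=\nu(f)$. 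I expect this eventual-uniqueness-of-the-minimum argument to be the technical heart of the transcendental case, and it is exactly where the hypothesis ``no limit in $K$, transcendental type'' is used (it guarantees $\nu(f(a_\rho))$ is fixed for every $f$ while $\gamma_\rho$ keeps growing).

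Now the algebraic case. Let $q$ have minimal degree among polynomials not fixed by $\{a_\rho\}$, so $\nu(q(a_\sigma))$ is strictly increasing cofinally (case \eqref{condforpscstoalg}), whereas every polynomial of degree $<\deg q$ has fixed value. I would first show $q$ is a key polynomial of its level by computing $\epsilon(q)$: using the Taylor expansion of $q$ at $a_\rho$, $\nu(q)-\nu(q(a_\rho))$ grows with $\rho$, so $\nu(q)=\nu(q(x))$ forces at least one $b\ge 1$ with $\nu(\partial_b q(a_\rho))+b\gamma_\rho$ small; more precisely $\epsilon(q)$ should come out to be $\sup_\rho\gamma_\rho$ (or the relevant $b$-averaged version), and I must check that no polynomial of degree $<\deg q$ attains $\epsilon$-value $\ge\epsilon(q)$. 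This is where minimality of $\deg q$ is decisive: if $\deg f<\deg q$ then $f$ is fixed, so $\nu(\partial_b f(a_\rho))+b\gamma_\rho - \nu(f(a_\rho))$ is governed by the same growing $\gamma_\rho$ but now $\nu(f)=\nu(f(a_\rho))$ is constant, whence $\epsilon(f)$ stays strictly below $\epsilon(q)$. To finish I must show $q$ is in fact a \emph{limit} key polynomial, i.e. it is not obtained as a ``non-limit'' key polynomial over a truncation of smaller degree; here I would invoke the characterization of limit key polynomials (Theorem \ref{definofkeypol}) and exhibit the approximations: the truncations $\nu_{x-a_\rho}$ restricted to degree $<\deg q$ all agree with $\nu$, yet no single one of them computes $\nu(q)$ (since $\nu_{x-a_\rho}(q)=\min_i\{\nu(\partial_i q(a_\rho))+i\gamma_\rho\}<\nu(q)=\nu(q(x))$ because $q(a_\rho)\ne q(x)$ in value), so $q$ sits strictly above the whole increasing family of truncations — the defining feature of a limit key polynomial.

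The main obstacle I anticipate is bookkeeping with the index $\rho$: the statements ``eventually constant'' and ``cofinally increasing'' must be made simultaneous for the finitely many polynomials $\partial_b f$, $b\le\deg f$, appearing in a given expansion, and one must be careful that $\lambda$ may be a limit ordinal of uncountable cofinality so ``eventually'' means ``for all $\rho\ge\rho_0$'' with $\rho_0$ depending on $f$. Once the right $\rho_0=\rho_f$ is chosen uniformly over all derivatives of $f$ (take the maximum of the finitely many $\rho_{\partial_b f}$), both cases reduce to a one-line comparison of valuations in the standard expansion. I would also need the elementary fact, stated in the excerpt, that $\{\gamma_\rho\}$ is increasing in $\Gamma'$ and that $x$ being a limit means $\nu(x-a_\rho)=\gamma_\rho$; these pin down $\epsilon(x-a_\rho)=\gamma_\rho$ and make the whole argument run.
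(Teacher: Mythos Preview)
Your transcendental case is essentially the paper's argument, but you lean on ``$\gamma_\rho\to\infty$'' to force the $i=0$ term to be the strict minimum, and that need not hold: the $\gamma_\rho$ are increasing but can be bounded in $\Gamma$. The paper avoids this by invoking Kaplansky's Lemma~8 (stated here as Lemma~\ref{lemmakaplvalpol}) and its Corollary~\ref{correlanurhowithnu}: these give directly that for large $\rho$ one has $\nu_\rho(f)=\nu(f(a_\rho))$, and in the transcendental case the right-hand side equals $\nu(f)$. Your conclusion is right, the justification just needs Kaplansky's lemma rather than an unbounded-growth claim.

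In the algebraic case your plan diverges from the paper in two ways. First, the paper does not attempt to compute $\epsilon(q)$ or prove directly that $q$ is a key polynomial; instead it goes straight to Theorem~\ref{definofkeypol} and verifies \textbf{(K1)--(K4)} for $Q_-=x-a_\rho$ (with $\rho$ chosen past the stabilization point for $q$). That theorem already yields ``$q$ is a key polynomial'' as part of the conclusion, so your separate $\epsilon$-computation is redundant---and your guess $\epsilon(q)=\sup_\rho\gamma_\rho$ is not in general correct (that supremum need not exist in the value group, and $\epsilon(q)$ can strictly exceed every $\gamma_\rho$). Second, and more important, your sketch of \textbf{(K1)--(K4)} skips \textbf{(K2)}: you must show that $\{\nu(Q')\mid Q'\in\Psi(Q_-)\}$ has no maximum. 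Here $\Psi(Q_-)$ consists of all linear $x-a$ with $\nu(x-a)>\nu(x-a_\rho)$, not only the $x-a_\sigma$ from the given sequence; if some $\nu(x-a)$ were maximal it would dominate every $\gamma_\sigma$, making $a\in K$ a limit of $\{a_\rho\}$. This is exactly where the hypothesis ``no limit in $K$'' enters---in the algebraic case, not (as you suggest) in the transcendental one. Once \textbf{(K2)} is in place, \textbf{(K3)} and \textbf{(K4)} follow from Corollary~\ref{correlanurhowithnu} together with the cofinality of $\{\gamma_\rho\}$ in $\nu(\Psi(Q_-))$ and the minimality of $\deg q$.
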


\section{Key polynomials}
We will assume throughout this paper that $K$ is a field, $\nu$ a valuation of $K[x]$, non-trivial on $K$ with $\nu(x)\geq 0$. We begin by making some remarks.
\begin{Obs}\label{exlinearnonlinepolarepsk}
\begin{description}
\item[(i)] Every linear polynomial $x-a$ is a key polynomial (of level $\epsilon(x-a)=\nu(x-a)$).

\item[(ii)] Take a polynomial $f(x)\in K[x]$ of degree greater than one and assume that there exists $a\in K$ such that
$\nu(\partial_bf(a))=\nu(\partial_bf(x))$ for every $b\in\N$ (note that such an $a$ always exists if the assumptions of Theorem \ref{compthemkppsc} hold and the pseudo-convergent sequence is transcendental or is algebraic and $\deg(f)\leq\deg(q)$). Write
\[
f(x)=f(a)+\sum_{i=1}^n\partial_if(a)(x-a)^i
\]
and take $h\in\{1,\ldots,n\}$ such that
\[
\nu(\partial_hf(x))+h\nu(x-a)=\min_{1\leq i\leq n}\{\nu(\partial_if(x))+i\nu(x-a)\}.
\]
If $\nu(f(a))<\nu(\partial_hf(x))+h\nu(x-a)$, then $\nu(f(x))=\nu(f(a))$ and hence
\[
\frac{\nu(f(x))-\nu(\partial_if(x))}{i}<\nu(x-a)
\]
for every $i$, $1\leq i\leq n$. Consequently, $\epsilon(f)<\nu(x-a)=\epsilon(x-a)$ and hence $f$ is not a key polynomial.
On the other hand, if
$$
\nu(\partial_hf(x))+h\nu(x-a)\leq\nu(f(a)),
$$
then
\begin{equation}\label{eqtaylorexpwithpcs}
\nu(f(x))\geq\nu(\partial_hf(x))+h\nu(x-a)
\end{equation}
and if the equality holds in (\ref{eqtaylorexpwithpcs}), then
\[
\epsilon(f)=\frac{\nu(f(x))-\nu(\partial_hf(x))}{h}=\nu(x-a)=\epsilon(x-a)
\]
and hence $f$ is not a key polynomial. In other words, the only situation when $f$ may be a key polynomial is when
$$
f(x)>\min_{1\leq i\leq n}\{f(a),\nu(\partial_if(x))+i\nu(x-a)\}.
$$
\end{description}
\end{Obs}

\begin{Obs}
We observe that if $Q$ is a key polynomial of level $\epsilon:=\epsilon(Q)$, then for every polynomial $f\in K[x]$ with
$\deg(f)<\deg(Q)$ and every $b\in\N$ we have
\begin{equation}\label{eqpolyndegsmallkeypol}
\nu(\partial_b(f))>\nu(f)-b\epsilon.
\end{equation}
Indeed, from the definition of key polynomial we have that $\epsilon>\epsilon(f)$. Hence, for every $b\in\N$ we have
\[
\frac{\nu(f)-\nu(\partial_b(f))}{b}\leq\epsilon(f)<\epsilon
\]
and this implies (\ref{eqpolyndegsmallkeypol}).
\end{Obs}
Let
\[
I(f)=\left\{b\in\N\left|\frac{\nu(f)-\nu(\partial_bf)}{b}=\epsilon(f)\right.\right\}
\]
and $b(f)=\min I(f)$.
\begin{Lema}
Let $Q$ be a key polynomial and take $f,g\in K[x]$ such that
$$
\deg(f)<\deg(Q)
$$
and
$$
\deg(g)<\deg(Q).
$$
Then for $\epsilon:=\epsilon(Q)$ and any $b\in\N$ we have the following:
\begin{description}\label{lemaonkeypollder}
\item[(i)] $\nu(\partial_b(fg))>\nu(fg)-b\epsilon$
\item[(ii)] If $\nu_Q(fQ+g)<\nu(fQ+g)$ and $b\in I(Q)$, then $\nu(\partial_b(fQ+g))=\nu(fQ)-b\epsilon$;
\item[(iii)] If $h_1,\ldots,h_s$ are polynomials such that $\deg(h_i)<\deg(Q)$ for every $i=1,\ldots, s$ and
$\displaystyle\prod_{i=1}^sh_i=qQ+r$ with $\deg(r)<\deg(Q)$, then
\[
\nu(r)=\nu\left(\prod_{i=1}^sh_i\right)<\nu(qQ).
\]
\end{description}
\end{Lema}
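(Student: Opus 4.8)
The plan is to deduce all three items from two ingredients: the Leibniz rule $\partial_b(fg)=\sum_{i+j=b}\partial_i(f)\partial_j(g)$, and, writing $\epsilon=\epsilon(Q)$, the fact (which is (\ref{eqpolyndegsmallkeypol})) that $\nu(\partial_b(p))>\nu(p)-b\epsilon$ whenever $\deg(p)<\deg(Q)$, together with the weak inequality $\nu(\partial_b(Q))\geq\nu(Q)-b\epsilon$, valid for all $b$ and an equality precisely when $b\in I(Q)$. Recall also $\nu(\partial_0h)=\nu(h)$ and that here $b\geq1$ throughout.

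For (i) I would expand $\partial_b(fg)$ by Leibniz and bound each summand: applying (\ref{eqpolyndegsmallkeypol}) to $f$ and to $g$ gives $\nu(\partial_i(f))+\nu(\partial_j(g))\geq\nu(fg)-b\epsilon$, and since $b\geq1$ at least one of $i,j$ is positive in every summand, so the inequality is strict there; the ultrametric inequality finishes it. For (ii), note first that the $Q$-standard expansion of $fQ+g$ is $g+fQ$, so $\nu_Q(fQ+g)<\nu(fQ+g)$ forces $\nu(fQ)=\nu(g)$. Expanding $\partial_b(fQ)$ by Leibniz, for $b\in I(Q)$ the term $f\,\partial_b(Q)$ has value exactly $\nu(fQ)-b\epsilon$ while every term with positive $f$-index is strictly larger (by (\ref{eqpolyndegsmallkeypol}) for $f$ and the weak inequality for $Q$), so $\nu(\partial_b(fQ))=\nu(fQ)-b\epsilon$; and $\nu(\partial_b(g))>\nu(g)-b\epsilon=\nu(fQ)-b\epsilon$, so $\partial_b(fQ+g)$ has the same value as $\partial_b(fQ)$. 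Both of these are short direct computations.

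For (iii) I would argue in two stages, and Stage 1 — the case $s=2$ — is where I expect the real work to be. Here $\deg(h_1h_2)<2\deg(Q)$, so in $h_1h_2=qQ+r$ the quotient $q$ also satisfies $\deg(q)<\deg(Q)$; this is the observation that makes the argument run. We may assume $h_1,h_2\neq0$, and if $q=0$ the claim is clear; otherwise, if $\nu(r)<\nu(qQ)$ then $\nu(h_1h_2)=\nu(r)<\nu(qQ)$ and we are done, so suppose for contradiction $\nu(r)\geq\nu(qQ)$, whence $\nu(h_1h_2)\geq\nu(qQ)$. Computing $\partial_{b(Q)}(h_1h_2)=\partial_{b(Q)}(qQ)+\partial_{b(Q)}(r)$, the same Leibniz computation as in (ii) (now with $q$ in the role of $f$, using $\deg(q)<\deg(Q)$, $b(Q)\in I(Q)$, and $\nu(r)\geq\nu(qQ)$) gives $\nu(\partial_{b(Q)}(h_1h_2))=\nu(qQ)-b(Q)\epsilon$, so $\bigl(\nu(h_1h_2)-\nu(\partial_{b(Q)}(h_1h_2))\bigr)/b(Q)\geq\epsilon$ and hence $\epsilon(h_1h_2)\geq\epsilon$. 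But item (i) says $\nu(\partial_b(h_1h_2))>\nu(h_1h_2)-b\epsilon$ for all $b$, i.e. $\epsilon(h_1h_2)<\epsilon$ — a contradiction. Therefore $\nu(r)<\nu(qQ)$ and $\nu(r)=\nu(h_1h_2)$.

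Stage 2 passes to general $s$ by induction on $s$, the case $s=1$ being trivial. Writing $\prod_{i=1}^{s-1}h_i=q'Q+r'$ with $\deg(r')<\deg(Q)$, the induction hypothesis gives $\nu(r')=\nu\bigl(\prod_{i=1}^{s-1}h_i\bigr)<\nu(q'Q)$; then $\prod_{i=1}^sh_i=q'h_sQ+r'h_s$, and writing $r'h_s=q''Q+r''$ with $\deg(r'')<\deg(Q)$ and applying Stage 1 to $r'$ and $h_s$ gives $\nu(r'')=\nu(r')+\nu(h_s)=\nu\bigl(\prod_{i=1}^sh_i\bigr)<\nu(q''Q)$. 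Since $\prod_{i=1}^sh_i=(q'h_s+q'')Q+r''$, uniqueness of division by the monic $Q$ identifies $r=r''$ and $q=q'h_s+q''$, and then $\nu(qQ)\geq\min\{\nu(q'h_sQ),\nu(q''Q)\}$, where $\nu(q'h_sQ)=\nu(h_s)+\nu(q'Q)>\nu(h_s)+\nu(r')=\nu\bigl(\prod_{i=1}^sh_i\bigr)$ and $\nu(q''Q)>\nu(r'')=\nu\bigl(\prod_{i=1}^sh_i\bigr)$, so $\nu(qQ)>\nu\bigl(\prod_{i=1}^sh_i\bigr)=\nu(r)$. The substance is all in Stage 1 — spotting that the two-factor quotient has degree below $\deg(Q)$ so that the mechanism of (ii) applies to it, and then pitting the bound $\epsilon(h_1h_2)<\epsilon$ coming from (i) against the exact value of $\partial_{b(Q)}(h_1h_2)$; everything after that is bookkeeping, and items (i) and (ii) are routine.
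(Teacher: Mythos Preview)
Your proof is correct and follows essentially the same approach as the paper: parts (i) and (ii) are identical Leibniz computations, and for (iii) both arguments hinge on the observation that $\deg(q)<\deg(Q)$ so that $\nu(\partial_{b}(qQ))=\nu(qQ)-b\epsilon$ for $b\in I(Q)$, then pit this against the strict bound from (i). The only cosmetic difference is that in the $s=2$ case the paper argues directly (showing $\nu(qQ)-b\epsilon=\nu(\partial_b(qQ))\geq\min\{\nu(\partial_b(fg)),\nu(\partial_b r)\}>\min\{\nu(fg),\nu(r)\}-b\epsilon$), whereas you reach the same contradiction by phrasing it as $\epsilon(h_1h_2)\geq\epsilon$ versus $\epsilon(h_1h_2)<\epsilon$.
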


\begin{proof}
\textbf{(i)} Since $\deg(f)<\deg(Q)$ and $\deg(g)<\deg(Q)$, for each $j\in \N$, we have
\[
\nu(\partial_jf)>\nu(f)-j\epsilon\mbox{ and }\nu(\partial_jg)>\nu(g)-j\epsilon.
\]
This, and the fact that
\[
\partial_b(fg)=\sum_{j=0}^b\partial_jf\partial_{b-j}g,
\]
imply that
\[
\nu(\partial_b(fg))\geq\min_{0\leq j\leq b}\{\nu(\partial_jf)+\nu(\partial_{b-j}g)\}>\nu(fg)-b\epsilon.
\]

\textbf{(ii)} If $\nu_Q(fQ+g)<\nu(fQ+g)$, then $\nu(fQ)=\nu(g)$. Hence, 
\[
\nu(\partial_bg)>\nu(g)-b\epsilon=\nu(fQ)-b\epsilon.
\]
Moreover, for every $j\in\N$, we have
\[
\nu(\partial_j f\partial_{b-j}Q)=\nu(\partial_jf)+\nu(\partial_{b-j}Q)>\nu(f)-j\epsilon+\nu(Q)-(b-j)\epsilon=\nu(fQ)-b\epsilon.
\]
Therefore,
\[
\nu(\partial_b(fQ+g))=\nu\left(f\partial_bQ+\sum_{j=1}^b\partial_jf\partial_{b-j}Q+\partial_bg\right)=\nu(fQ)-b\epsilon.
\]

\textbf{(iii)} We proceed by induction on $s$. If $s=1$, then $h_1=qQ+r$ with
$$
\deg(h_1)<\deg(Q),
$$
which implies that $h_1=r$ and $q=0$. Our result follows immediately.

Next, consider the case $s=2$. Take $f,g\in K[x]$ such that $\deg(f)<\deg(Q)$, $\deg(g)<\deg(Q)$ and write $fg=qQ+r$ with
$\deg(r)<\deg(Q)$. Then
$$
\deg(q)<\deg(Q)
$$
and for $b\in I(Q)$ we have
\[
\nu\left(\partial_b(qQ)\right)=\nu\left(\sum_{j=0}^b\partial_jq\partial_{b-j}Q\right)=\nu(qQ)-b\epsilon.
\]
This and part \textbf{(i)} imply that
\begin{displaymath}
\begin{array}{rcl}
\nu(qQ)-b\epsilon &=& \nu\left(\partial_b(qQ)\right)= \nu(\partial_b(fg)-\partial_b(r))\\
                  &\geq &\min\{\nu\left(\partial_b(fg)\right),\nu\left(\partial_b(r)\right)\}\\
                  &>&\min\{\nu(fg),\nu(r)\}-b\epsilon.

\end{array}
\end{displaymath}
and consequently
\begin{equation}\label{equationwithepsilon}
\nu(r)=\nu(fg)<\nu(qQ).
\end{equation}

Assume now that $s>2$ and define $\displaystyle h:=\prod_{i=1}^{s-1}h_i$. Write $h=q_1Q+r_1$ with $\deg(r_1)<\deg(Q)$. Then by the induction hypothesis we have
$$
\nu(r_1)=\nu(h)<\nu(q_1Q)
$$
and hence
\[
\nu\left(\prod_{i=1}^sh_i\right)=\nu(r_1h_s)<\nu(q_1h_sQ).
\]
Write $r_1h_s=q_2Q+r_2$. Then, by equation (\ref{equationwithepsilon}) we have
\[
\nu(r_2)=\nu(r_1h_s)<\nu(q_2Q).
\]
If $\displaystyle \prod_{i=1}^sh_i=qQ+r$ with $\deg(r)<\deg(Q)$, then
\[
qQ+r=\prod_{i=1}^sh_i=hh_s=(q_1Q+r_1)h_s=q_1h_sQ+r_1h_s=q_1h_sQ+q_2Q+r_2
\]
and hence $q=q_1h_s+q_2$ and $r=r_2$. Therefore,
\[
\nu(qQ)\geq\min\{\nu(q_1h_sQ),\nu(q_2Q)\}>\nu(r_1h_s)=\nu(r)=\nu\left(\prod_{i=1}^sh_i\right).
\]
This is what we wanted to prove.
\end{proof}

We denote by $p$ the \textbf{exponent characteristic} of $K$, that is, $p=1$ if $\Ch(K)=0$ and $p=\Ch(K)$ if $\Ch(K)>0$.
\begin{Prop}\label{propaboutpseudkeyool}
Let $Q\in K[x]$ be a key polynomial and set $\epsilon:=\epsilon(Q)$. Then the following hold:
\begin{description}
\item[(i)] Every element in $I(Q)$ is a power of $p$;
\item[(ii)] $Q$ is irreducible.
\end{description}
\end{Prop}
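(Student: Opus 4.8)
The plan is to prove (i) and (ii) together, deducing (ii) from (i) by a factorization argument. For part (i), suppose $b \in I(Q)$ and write $b = p^k \cdot m$ with $\gcd(m,p)=1$; the goal is to show $m = 1$. The idea is a standard ``dilogarithm''-style identity for the formal derivatives: for any $f$ and any $a, c$ with $a + c = b$ one has $\partial_a \circ \partial_c = \binom{a+c}{a}\partial_{a+c}$. Hence if $b$ is not a power of $p$, we can split $b = a + c$ nontrivially with $\binom{b}{a} \not\equiv 0 \pmod p$ (for instance, by Kummer's theorem one chooses the split according to the base-$p$ digits of $b$, which is possible precisely when $b$ is not a $p$-th power times a unit -- indeed when $b$ has at least two nonzero base-$p$ digits), so that $\partial_b f = \frac{1}{\binom{b}{a}}\partial_a(\partial_c f)$ with the constant a unit in $K$. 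Then
\[
\nu(\partial_b f) = \nu(\partial_a(\partial_c f)).
\]
Applying this with $f = Q$ and using $b \in I(Q)$, i.e. $\nu(\partial_b Q) = \nu(Q) - b\epsilon$, I want to derive a contradiction by showing that iterating $\partial$ in two smaller steps cannot lose ``enough'' value unless each step is itself extremal, forcing $a, c \in I(Q)$; but then $a + c = b(Q)$-type minimality or an inequality $\epsilon(\partial_c Q) \le \epsilon(Q)$ combined with $\deg(\partial_c Q) < \deg(Q)$ and the definition of key polynomial yields the contradiction. The cleanest route: from $\deg(\partial_c Q) < \deg Q$ and the Remark (inequality \eqref{eqpolyndegsmallkeypol}) applied to $\partial_c Q$, we get $\nu(\partial_a(\partial_c Q)) > \nu(\partial_c Q) - a\epsilon \ge (\nu(Q) - c\epsilon) - a\epsilon = \nu(Q) - b\epsilon$, where the middle inequality just uses $\epsilon(Q) \ge \epsilon$ applied to the pair $(\nu(Q), \nu(\partial_c Q))$. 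This strictly contradicts $\nu(\partial_b Q) = \nu(Q) - b\epsilon$. Hence no nontrivial split with unit binomial coefficient exists, so $b$ is a power of $p$.

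For part (ii), suppose $Q = fg$ with $f, g \in K[x]$ monic of degree $\ge 1$; then $\deg f < \deg Q$ and $\deg g < \deg Q$. Apply Lemma \ref{lemaonkeypollder}(i) to the product $fg$: for every $b \in \N$,
\[
\nu(\partial_b(fg)) > \nu(fg) - b\epsilon.
\]
But $fg = Q$, so this says $\nu(\partial_b Q) > \nu(Q) - b\epsilon$ for all $b$, which means $\epsilon(Q) = \max_b \frac{\nu(Q) - \nu(\partial_b Q)}{b} < \epsilon$, a contradiction. Therefore $Q$ is irreducible.

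I expect the main obstacle to be part (i): getting the combinatorial input about binomial coefficients exactly right (Kummer's theorem characterization of when $v_p\binom{b}{a} = 0$, and the observation that such a split with $0 < a < b$ exists iff $b$ is not of the form $p^k$), and then making sure the value-theoretic inequality is applied to the correct derivative $\partial_c Q$ rather than $Q$ itself -- the degree drop $\deg(\partial_c Q) < \deg(Q)$ is what unlocks Remark's inequality \eqref{eqpolyndegsmallkeypol}, and it is essential that $c \ge 1$ so that this degree drop is genuine. Part (ii) is routine once Lemma \ref{lemaonkeypollder}(i) is in hand.
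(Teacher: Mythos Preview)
Your proposal is correct and follows essentially the same route as the paper. For part (i) the paper makes the specific split $b=p^t r$ with $r>1$ coprime to $p$, takes $c=b'=b-p^t$ and $a=p^t$, and invokes (via Kaplansky) that $\binom{b}{p^t}$ is prime to $p$; this is exactly the clean choice that resolves your stated worry about the combinatorics (your parenthetical ``at least two nonzero base-$p$ digits'' is not quite the right characterization, but the existence of a carry-free split whenever $b$ is not a power of $p$ is what you need, and $a=p^t$ always works). Part (ii) is identical to the paper's argument via Lemma~\ref{lemaonkeypollder}(i).
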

\begin{proof}
\textbf{(i)} Take $b\in I(Q)$ and assume, aiming for contradiction, that $b$ is not a power of $p$. Write $b=p^tr$ where $r>1$ is prime to $p$. Then, by Lemma 6 of \cite{Kap}, $\binom{b}{p^t}$ is prime to $p$ and hence $\nu\binom{b}{p^t}=0$. Since $\binom{b}{p^t}\partial_b=\partial_{p^t}\circ\partial_{b'}$ for $b'=b-p^t$, we have
\[
\nu(\partial_{b'}Q)-\nu(\partial_bQ)=\nu(\partial_{b'}Q)-\nu(\partial_{p^t}(\partial_{b'}Q))\leq p^t\epsilon(\partial_{b'}(Q))<p^t\epsilon
\]
because $\deg(\partial_{b'}Q)<\deg(Q)$ and $Q$ is a key polynomial. Hence,
\[
b\epsilon=\nu(Q)-\nu(\partial_bQ)=\nu(Q)-\nu(\partial_{b'}Q)+\nu(\partial_{b'}Q)-\nu(\partial_bQ)<
b'\epsilon+p^t\epsilon=b\epsilon,
\]
which gives the desired contradiction.

\textbf{(ii)} If $Q=gh$ for non-constant polynomials $g,h\in K[x]$, then by Lemma \ref{lemaonkeypollder} \textbf{(i)}, we would have for $b\in I(Q)$ that
\[
\nu(\partial_bQ)>\nu(Q)-b\epsilon,
\]
which is a contradiction to the definition of $b$ and $\epsilon$.
\end{proof}

We present an example to show that $\nu_q$ does not need to be a valuation for a general polynomial $q(x)\in K[x]$.
\begin{Exa}
Consider a valuation $\nu$ in $K[x]$ such that $\nu(x)=\nu(a)=1$ for some $a\in K$. Take $q(x)=x^2+1$ (which can be irreducible, for instance, if $K=\R$ or $K=\mathbb F_p$ and $-1$ is not a quadratic residue $\mod p$). Since $x^2-a^2=(x^2+1)-(a^2+1)$ we have
\[
\nu_q(x^2-a^2)=\min\{\nu(x^2+1),\nu(a^2+1)\}=0.
\]
On the other hand, $\nu_q(x+a)=\nu(x+a)\geq\min\{\nu(a),\nu(x)\}=1$ (and the same holds for $\nu_q(x-a)$). Hence
\[
\nu_q(x^2-a^2)=0<1+1\leq\nu_q(x-a)+\nu_q(x+a)
\]
which shows that $\nu_q$ is not a valuation.
\end{Exa}

If $f=f_0+f_1q+\ldots+f_nq^n$ is the $q$-standard decomposition of $f$ we set
\[
S_q(f):=\{i\in\{0,\ldots, n\}\mid \nu(f_iq^i)=\nu_q(f)\}\mbox{ and }\delta_q(f)=\max S_q(f).
\]
\begin{Prop}\label{proptruncakeypolval}
If $Q$ is a key polynomial, then $\nu_Q$ is a valuation of $K[x]$.
\end{Prop}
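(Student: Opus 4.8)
The plan is to show that $\nu_Q$ respects both the additive (ultrametric) inequality and multiplicativity; the former is automatic from the definition of $\nu_Q$ as a minimum over coefficients of the $Q$-standard expansion, so the real content is multiplicativity. Given $f = \sum_i f_iQ^i$ and $g = \sum_j g_jQ^j$ in $Q$-standard form, I would form the product $fg$ and collect terms by powers of $Q$; the obstruction is that the naive coefficient $\sum_{i+j=k} f_ig_j$ has degree possibly $\geq \deg Q$, so it is \emph{not} the $Q$-standard coefficient of $fg$, and there is carrying over to higher powers of $Q$. I would handle this by writing each partial product $f_ig_j = q_{ij}Q + r_{ij}$ with $\deg(r_{ij}) < \deg(Q)$, and invoking Lemma~\ref{lemaonkeypollder}\textbf{(iii)} (the case $s=2$) to get $\nu(r_{ij}) = \nu(f_ig_j) < \nu(q_{ij}Q)$. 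This says the ``carry'' $q_{ij}Q$ always has strictly larger value than the main term $r_{ij}$, so carrying cannot lower the value — which is precisely what is needed.

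Concretely, I would argue as follows. Let $\alpha = \delta_Q(f)$ and $\beta = \delta_Q(g)$ be the largest indices achieving the minimum for $f$ and $g$ respectively. The term of $fg$ of ``weight'' $\alpha+\beta$ in the naive grouping is $\left(\sum_{i+j=\alpha+\beta} f_ig_j\right)Q^{\alpha+\beta}$. I claim that among the products $f_ig_j$ with $i+j = \alpha+\beta$, the one with $i=\alpha$, $j=\beta$ strictly minimizes $\nu(f_ig_j)$: indeed for $i < \alpha$ we have $\nu(f_iQ^i) > \nu_Q(f)$ hence $\nu(f_i) > \nu_Q(f) - i\nu(Q)$, which combined with $j > \beta$ and $\nu(g_jQ^j) \geq \nu_Q(g)$ forces $\nu(f_ig_j) > \nu_Q(f) + \nu_Q(g) - (\alpha+\beta)\nu(Q) = \nu(f_\alpha g_\beta)$; symmetrically for $i > \alpha$. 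So $\nu\!\left(\sum_{i+j=\alpha+\beta} f_ig_j\right) = \nu(f_\alpha g_\beta) = \nu_Q(f) + \nu_Q(g) - (\alpha+\beta)\nu(Q)$. Now $f_\alpha g_\beta = q_{\alpha\beta}Q + r_{\alpha\beta}$ with $\nu(r_{\alpha\beta}) = \nu(f_\alpha g_\beta)$ by Lemma~\ref{lemaonkeypollder}\textbf{(iii)}, so after reducing the naive coefficient to $Q$-standard form, the coefficient of $Q^{\alpha+\beta}$ in $fg$ is a polynomial of degree $< \deg Q$ whose value is still $\nu(f_\alpha g_\beta)$ — since the contributions from the other naive products at weight $\alpha+\beta$, and the carries $q_{ij}Q$ pushed down from weight $\alpha+\beta-1$, all have strictly larger value (the carries because of the strict inequality $\nu(q_{ij}Q) > \nu(f_ig_j)$, and the weight $\alpha+\beta-1$ products have value $\geq \nu_Q(f)+\nu_Q(g) - (\alpha+\beta-1)\nu(Q)$, so their carries, being $\geq$ that plus $\nu(Q)$, exceed $\nu(f_\alpha g_\beta)$ once we account correctly — here one must be a little careful, see below).

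This shows $\nu_Q(fg) \leq \nu_Q(f) + \nu_Q(g)$, and the reverse inequality $\nu_Q(fg) \geq \nu_Q(f)+\nu_Q(g)$ is the easy direction: every naive product $f_ig_j Q^{i+j}$ has value $\geq \nu_Q(f) + \nu_Q(g)$, and reducing to standard form only replaces terms by others of value $\geq$ the original (the remainder $r_{ij}$ has the same value, the carry $q_{ij}Q$ has strictly larger value), so every $Q$-standard coefficient of $fg$ times the corresponding power of $Q$ has value $\geq \nu_Q(f)+\nu_Q(g)$. Combining gives equality. I expect the main obstacle to be the bookkeeping in the cancellation argument at weight $\alpha+\beta$: one needs to verify that neither the other same-weight products nor the carries descending from lower weights can cancel the leading term $r_{\alpha\beta}$, and this is exactly where Lemma~\ref{lemaonkeypollder}\textbf{(iii)}'s strict inequality $\nu(f_ig_j) < \nu(q_{ij}Q)$ does all the work — it guarantees carries are harmless. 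Everything else is the ultrametric inequality applied term by term.
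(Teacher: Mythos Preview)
Your approach is essentially the same as the paper's: both arguments reduce multiplicativity to Lemma~\ref{lemaonkeypollder}\textbf{(iii)}, write $f_ig_j=q_{ij}Q+r_{ij}$, and isolate a specific coefficient $a_{k_0}$ of the $Q$-expansion of $fg$ whose value is exactly $\nu_Q(f)+\nu_Q(g)$. The only structural difference is that the paper takes $i_0=\min S_Q(f)$, $j_0=\min S_Q(g)$, whereas you take $\alpha=\max S_Q(f)$, $\beta=\max S_Q(g)$. Both choices work, but the min-index choice is marginally cleaner for the carry terms: at weight $i_0+j_0-1$ every pair $(i,j)$ automatically satisfies $i<i_0$ or $j<j_0$, so the products themselves already have strictly larger value. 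With your max-index choice the pair $(\alpha,\beta-1)$ can achieve the minimum value, and you must (as you do) invoke the strict inequality $\nu(q_{ij}Q)>\nu(f_ig_j)$ to see that its carry is nonetheless harmless.

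One small slip to fix: in your same-weight argument you write ``for $i<\alpha$ we have $\nu(f_iQ^i)>\nu_Q(f)$''. This is false in general, since $\alpha$ is the \emph{largest} index in $S_Q(f)$ and there may be smaller ones. The correct reasoning with max indices is: if $i<\alpha$ then $j>\beta$, and since $\beta=\max S_Q(g)$ this forces $\nu(g_jQ^j)>\nu_Q(g)$, while $\nu(f_iQ^i)\geq\nu_Q(f)$; symmetrically for $i>\alpha$. You already mention $j>\beta$ in the same sentence, so you have all the ingredients---just swap which factor carries the strict inequality. With that correction the argument is complete.
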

\begin{proof}
One can easily see that $\nu_Q(f+g)\geq\min\{\nu_Q(f),\nu_Q(g)\}$ for every $f,g\in K[x]$. It remains to prove that
$\nu_Q(fg)=\nu_Q(f)+\nu_Q(g)$ for every $f,g\in K[x]$. Assume first that $\deg(f)<\deg(Q)$ and $\deg(g)<\deg(Q)$ and let $fg=aQ+c$ be the $Q$-standard expansion of $fg$. By Lemma \ref{lemaonkeypollder} \textbf{(iii)} we have
\[
\nu(fg)=\nu(c)<\nu(aQ)
\]
and hence
\[
\nu_Q(fg)=\min\{\nu(aQ),\nu(c)\}=\nu(c)=\nu(fg)=\nu(f)+\nu(g)=\nu_Q(f)+\nu_Q(g).
\]

Now assume that $f,g\in K[x]$ are any polynomials and consider the $Q$-expansions
\[
f=f_0+\ldots+f_nQ^n\mbox{ and }g=g_0+\ldots+g_mQ^m
\]
of $f$ and $g$. Then, using the first part of the proof, we obtain
\[
\nu_Q(fg)\geq\min_{i,j}\{\nu_Q(f_ig_jQ^{i+j})\}=\min_{i,j}\{\nu_Q(f_iQ^i)+\nu_Q(g_jQ^j)\}=\nu_Q(f)+\nu_Q(g).
\]
For each $i\in\{0,\ldots,n\}$ and $j\in\{0,\ldots,m\}$, let $f_ig_j=a_{ij}Q+c_{ij}$ be the $Q$-standard expansion of $f_ig_j$. Then, by Lemma \ref{lemaonkeypollder} \textbf{(iii)}, we have
\[
\nu(f_iQ^i)+\nu(g_jQ^j)=\nu(f_ig_j)+\nu(Q^{i+j})=\nu(c_{ij})+\nu(Q^{i+j})=\nu(c_{ij}Q^{i+j}).
\]
Let
\[
i_0=\min\{i\mid\nu_Q(f)=\nu(f_iQ^i)\}\mbox{ and }j_0=\min\{j\mid\nu_Q(g)=\nu(g_jQ^j)\},
\]
and set $k_0:=i_0+j_0$. Then for every $i<i_0$ or $j<j_0$ we have
\begin{equation}\label{eqnatnaksdjs}
\min\left\{\nu(a_{ij}Q^{i+j+1}),\nu(c_{ij}Q^{i+j})\right\}=\nu(f_iQ^i)+\nu(g_jQ^j)>\nu(c_{i_0j_0}Q^{k_0}).
\end{equation}
Let $fg=a_0+a_1Q+\ldots+a_rQ^r$ be the $Q$-standard expansion of $fg$. Then
\[
a_{k_0}=\sum_{i+j+1=k_0}a_{ij}+\sum_{i+j=k_0}c_{ij}.
\]
This and equation (\ref{eqnatnaksdjs}) give us that
\[
\nu(a_{k_0}Q^{k_0})=\nu(c_{i_0j_0}Q^{k_0})=\nu(f_{i_0}Q^{i_0})+\nu(g_{j_0}Q^{j_0})=\nu_Q(f)+\nu_Q(g).
\]
Therefore,
\[
\nu_Q(fg)=\min_{0\leq k\leq r}\{\nu(a_kQ^k)\}\leq \nu_Q(f)+\nu_Q(g),
\]
which completes the proof.

\end{proof}

\begin{Prop}\label{Propdificil}
Let $Q\in K[x]$ be a key polynomial and set $\epsilon:=\epsilon(Q)$. For any $f\in K[x]$ the following hold:
\begin{description}
\item[(i)] For any $b\in\N$ we have
\begin{equation}\label{eqthatcompvalutrunc}
\frac{\nu_Q(f)-\nu_Q(\partial_bf)}{b}\leq \epsilon;
\end{equation}
\item[(ii)] If $S_Q(f)\neq\{0\}$, then the equality in (\ref{eqthatcompvalutrunc}) holds for some $b\in\N$;

\item[(iii)] If for some $b\in\N$, the equality in (\ref{eqthatcompvalutrunc}) holds and $\nu_Q(\partial_bf)=\nu(\partial_bf)$, then
$\epsilon(f)\geq\epsilon$. If in addition, $\nu(f)>\nu_Q(f)$, then $\epsilon(f)>\epsilon$.
\end{description}
\end{Prop}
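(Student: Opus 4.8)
The plan is to work throughout with the $Q$-standard expansion $f=f_0+f_1Q+\dots+f_nQ^n$ and to exploit the product rule $\partial_b(f_iQ^i)=\sum_{j=0}^b \partial_jf_i\,\partial_{b-j}(Q^i)$ together with the two basic estimates already at our disposal: Lemma \ref{lemaonkeypollder}\textbf{(i)} (which controls $\nu(\partial_j(\cdot))$ for polynomials of degree $<\deg Q$) and the relation $\nu(\partial_b(Q^i))\geq \nu(Q^i)-b\epsilon$ with equality when $b\in I(Q)$ and $b$ divides into $i$ appropriately. For \textbf{(i)}, I would expand $\partial_bf$ in powers of $Q$ (noting that $\partial_b$ does not increase $Q$-degree beyond $n$), estimate $\nu$ of each coefficient-block $\partial_b(f_iQ^i)$ from below by $\nu(f_iQ^i)-b\epsilon$ using the two estimates above, and conclude $\nu_Q(\partial_bf)\geq \nu_Q(f)-b\epsilon$, which is exactly \eqref{eqthatcompvalutrunc}. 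The subtlety is that after applying $\partial_b$ the terms no longer sit in a clean $Q$-standard form, so one must re-expand; but since the bound $\nu(\cdot)\geq \nu_Q(f)-b\epsilon$ is stable under the re-expansion (it only uses Proposition \ref{proptruncakeypolval} and the triangle inequality for $\nu_Q$), this causes no real trouble.

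For \textbf{(ii)}, the hypothesis $S_Q(f)\neq\{0\}$ means some index $i\geq 1$ achieves $\nu(f_iQ^i)=\nu_Q(f)$; let $i_0=\delta_Q(f)$ be the largest such index, so $i_0\geq 1$. I would choose $b$ of the form $p^s$ maximal with $p^s\mid i_0$ times a unit — more precisely, pick $b\in I(Q)$ and an exponent so that $\binom{i_0}{?}$ is prime to $p$ (via Lemma 6 of \cite{Kap}, as in Proposition \ref{propaboutpseudkeyool}), forcing $\nu(\partial_b(Q^{i_0}))=\nu(Q^{i_0})-b\epsilon$ exactly, while all the other contributions to the relevant $Q$-block of $\partial_bf$ have strictly larger value. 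The maximality of $i_0$ is what prevents cancellation from higher blocks: a term $\partial_b(f_iQ^i)$ with $i>i_0$ has $\nu>\nu_Q(f)=\nu(f_{i_0}Q^{i_0})$ before differentiation, and $-b\epsilon$ affects all blocks equally. Hence equality propagates to $\nu_Q(\partial_bf)=\nu_Q(f)-b\epsilon$.

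For \textbf{(iii)}, suppose equality holds in \eqref{eqthatcompvalutrunc} for some $b$ and $\nu_Q(\partial_bf)=\nu(\partial_bf)$. Then
\[
\frac{\nu(f)-\nu(\partial_bf)}{b}\;\geq\;\frac{\nu_Q(f)-\nu_Q(\partial_bf)}{b}\;=\;\epsilon,
\]
using $\nu(f)\geq\nu_Q(f)$ on the left numerator and the assumed equality $\nu(\partial_bf)=\nu_Q(\partial_bf)$; by definition of $\epsilon(f)$ this gives $\epsilon(f)\geq\epsilon$. If moreover $\nu(f)>\nu_Q(f)$, the inequality $\nu(f)-\nu(\partial_bf)>\nu_Q(f)-\nu_Q(\partial_bf)=b\epsilon$ is strict, so $\epsilon(f)>\epsilon$. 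I expect part \textbf{(ii)} to be the main obstacle: it requires pinning down precisely which $b$ realizes the equality and checking, via the binomial-coefficient divisibility argument, that the extremal $Q$-block of $\partial_bf$ does not suffer cancellation — the bookkeeping across all pairs $(i,j)$ in the product rule, combined with the maximality of $\delta_Q(f)$, is where the care is needed.
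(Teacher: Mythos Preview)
Your outline for parts \textbf{(i)} and \textbf{(iii)} is correct and coincides with the paper's argument (for \textbf{(i)} the paper packages the estimate $\nu_Q(\partial_b(f_iQ^i))\geq\nu(f_iQ^i)-b\epsilon$ as Corollary~\ref{Coroaboutderib}, derived from Lemma~\ref{Lemamagic3}; for \textbf{(iii)} the chain of inequalities is exactly yours).

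The gap is in \textbf{(ii)}. First, the paper does \emph{not} take $b\in I(Q)$: it sets $b=p^e\,b(Q)$, where $p^e$ is the exact power of $p$ dividing the chosen index, and there is no reason for $p^e b(Q)$ itself to lie in $I(Q)$. More importantly, the paper takes $j_0:=\min S_Q(f)$, not $\delta_Q(f)=\max S_Q(f)$, and this choice is what drives the cancellation argument. Writing each $\partial_b(f_jQ^j)$ as a sum of terms $T_b(b_0,\dots,b_r)=\partial_{b_0}f_j\bigl(\prod_i\partial_{b_i}Q\bigr)Q^{j-r}$, the paper splits the contributions from $j\in S_Q(f)$ into four cases and shows that every term \emph{other} than the main one (namely $\binom{j_0}{p^e}f_{j_0}(\partial_{b(Q)}Q)^{p^e}Q^{j_0-p^e}$) either has $\nu_Q>\nu_Q(f)-b\epsilon$ or lands, after $Q$-re-expansion, at $Q$-degree $\geq j_0-p^e+1$. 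Thus the coefficient of $Q^{j_0-p^e}$ in $\partial_b f$ is exactly $\binom{j_0}{p^e}h$ plus higher-value junk, where $f_{j_0}(\partial_{b(Q)}Q)^{p^e}=rQ+h$; since $p\nmid\binom{j_0}{p^e}$ and $\nu(hQ^{j_0-p^e})=\nu_Q(f)-b\epsilon$, equality follows. The minimality of $j_0$ is precisely what forces the competing terms from other $j\in S_Q(f)$ to sit at \emph{higher} $Q$-powers (because $j-r\geq j_0-p^e+1$ whenever $j>j_0$ or $r<p^e$). Your argument with $i_0=\max S_Q(f)$ handles only $j>i_0$ (which are not in $S_Q(f)$ anyway), but says nothing about $j\in S_Q(f)$ with $j<i_0$: those contribute to \emph{lower} $Q$-powers and can feed back into the $Q^{i_0-p^e}$ block through the carries in the $Q$-re-expansion, so ``maximality prevents cancellation from higher blocks'' does not close the argument. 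Switching to $j_0=\min S_Q(f)$ and $b=p^e b(Q)$ with $p^e\|j_0$ fixes this.
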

Fix a key polynomial $Q$ and $h\in K[x]$ with $\deg(h)<\deg(Q)$. Then, for every $b\in\N$ the Leibnitz rule for derivation gives us that
\begin{equation}
\partial_b(hQ^n)=\sum_{b_0+\ldots+b_r=b}T_b(b_0,\ldots,b_r)
\end{equation}
where
\[
T_b(b_0,\ldots, b_r):=\partial_{b_0}h\left(\prod_{i=1}^r\partial_{b_i}Q\right)Q^{n-r}.
\]

In order to prove Proposition \ref{Propdificil}, we will need the following result:
\begin{Lema}\label{Lemamagic3}
Let $Q$ be a key polynomial, $h\in K[x]$ with $\deg(h)<\deg(Q)$ and set $\epsilon:=\epsilon(Q)$. For any $b\in\N$ we have
\[
\nu_Q(T_b(b_0,\ldots,b_r))\geq \nu(hQ^n)-b\epsilon.
\]
Moreover, if either $b_0>0$ or $b_i\notin I(Q)$ for some $i=1,\ldots, r$, then
\[
\nu_Q(T_b(b_0,\ldots,b_r))> \nu(hQ^n)-b\epsilon.
\]
\end{Lema}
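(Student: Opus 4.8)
The plan is to estimate each term $T_b(b_0,\ldots,b_r)=\partial_{b_0}h\bigl(\prod_{i=1}^r\partial_{b_i}Q\bigr)Q^{n-r}$ factor by factor, using the two basic inequalities already available: for $h$ of degree $<\deg(Q)$ we have $\nu(\partial_{b_0}h)>\nu(h)-b_0\epsilon$ (strict, by (\ref{eqpolyndegsmallkeypol}), with the convention $\partial_0 h=h$ giving equality when $b_0=0$); and for $Q$ itself, $\nu(\partial_{b_i}Q)\geq\nu(Q)-b_i\epsilon$, with equality precisely when $b_i\in I(Q)$ and strict inequality when $b_i\notin I(Q)$. Multiplying these together and collecting the $Q^{n-r}$ factor, and noting $b_0+b_1+\cdots+b_r=b$, gives
\[
\nu(T_b(b_0,\ldots,b_r))\;\geq\;\nu(h)+(n-r)\nu(Q)+\sum_{i=1}^r\nu(\partial_{b_i}Q)\;\geq\;\nu(hQ^n)-b\epsilon,
\]
and this is strict as soon as $b_0>0$ or some $b_i\notin I(Q)$. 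So far this is just about $\nu(T_b)$, not $\nu_Q(T_b)$, so the remaining (and main) point is to pass from $\nu$ to the truncation $\nu_Q$.

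First I would note that $T_b(b_0,\ldots,b_r)$ is already written as $g\cdot Q^{n-r}$ where $g:=\partial_{b_0}h\cdot\prod_{i=1}^r\partial_{b_i}Q$ is a product of polynomials each of degree $<\deg(Q)$ (here $\partial_{b_i}Q$ has degree $<\deg(Q)$ because $b_i\geq 1$). By Lemma \ref{lemaonkeypollder} \textbf{(iii)}, writing $g=qQ+c$ with $\deg(c)<\deg(Q)$, we get $\nu(c)=\nu(g)<\nu(qQ)$; and since $\nu_Q$ is a valuation (Proposition \ref{proptruncakeypolval}) with $\nu_Q(Q)=\nu(Q)$ and $\nu_Q$ agreeing with $\nu$ on polynomials of degree $<\deg(Q)$, the $Q$-standard expansion of $T_b=gQ^{n-r}=qQ^{n-r+1}+cQ^{n-r}$ has $\nu_Q(T_b)=\min\{\nu(qQ^{n-r+1}),\nu(cQ^{n-r})\}=\nu(cQ^{n-r})=\nu(c)+(n-r)\nu(Q)=\nu(g)+(n-r)\nu(Q)=\nu(gQ^{n-r})=\nu(T_b)$. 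In other words, for these particular polynomials $T_b$ one has $\nu_Q(T_b)=\nu(T_b)$, so the estimates from the first paragraph transfer verbatim.

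Thus the argument reduces to the two $\nu$-estimates of the first paragraph plus the identity $\nu_Q(T_b)=\nu(T_b)$ coming from Lemma \ref{lemaonkeypollder}\textbf{(iii)} and Proposition \ref{proptruncakeypolval}. The main obstacle is making sure the case bookkeeping in the strict inequality is airtight: when $b_0=0$ the factor $\partial_{b_0}h=h$ contributes exactly $\nu(h)$ with no loss, so strictness must come entirely from some $\partial_{b_i}Q$ with $b_i\notin I(Q)$; when $b_0>0$ the strict inequality $\nu(\partial_{b_0}h)>\nu(h)-b_0\epsilon$ alone suffices regardless of the $b_i$. One should also record the degenerate case $r=0$ (so $T_b=\partial_b h\cdot Q^n$ with $b_0=b$), where $b_0>0$ unless $b=0$, and $b=0$ gives $T_0=hQ^n$ with equality throughout — consistent with the statement. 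Assembling these cases yields both assertions.
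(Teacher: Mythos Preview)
Your ingredients are exactly those of the paper: the strict bound $\nu(\partial_{b_0}h)>\nu(h)-b_0\epsilon$ when $b_0>0$ from (\ref{eqpolyndegsmallkeypol}), the bound $\nu(\partial_{b_i}Q)\geq\nu(Q)-b_i\epsilon$ with strictness precisely when $b_i\notin I(Q)$, and Proposition \ref{proptruncakeypolval}. There is, however, a slip in your passage from $\nu$ to $\nu_Q$. The expression $qQ^{n-r+1}+cQ^{n-r}$ is \emph{not} in general the $Q$-standard expansion of $T_b$: the polynomial $g=\partial_{b_0}h\prod_{i=1}^r\partial_{b_i}Q$ is a product of $r+1$ factors each of degree $<\deg(Q)$, so after dividing by $Q$ the quotient $q$ can have degree as large as $(r-1)(\deg(Q)-1)-1$, which exceeds $\deg(Q)$ once $r\geq 2$. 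Hence the identity $\nu_Q(T_b)=\min\{\nu(qQ^{n-r+1}),\nu(cQ^{n-r})\}$ is not justified as written.

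The repair is immediate and is precisely what the paper does: apply the multiplicativity of $\nu_Q$ to the individual factors of $T_b$ rather than to the product $g$. Since $\partial_{b_0}h$ and each $\partial_{b_i}Q$ have degree $<\deg(Q)$, one has $\nu_Q(\partial_{b_0}h)=\nu(\partial_{b_0}h)$ and $\nu_Q(\partial_{b_i}Q)=\nu(\partial_{b_i}Q)$; together with $\nu_Q(Q)=\nu(Q)$ this gives
\[
\nu_Q(T_b(b_0,\ldots,b_r))=\nu(\partial_{b_0}h)+\sum_{i=1}^r\nu(\partial_{b_i}Q)+(n-r)\nu(Q),
\]
after which your first-paragraph estimates finish the proof. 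This makes the detour through Lemma \ref{lemaonkeypollder}\textbf{(iii)} and the separate verification that $\nu_Q(T_b)=\nu(T_b)$ unnecessary.
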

\begin{proof}
Since $\deg(h)<\deg(Q)$ and $Q$ is a key polynomial we have $\epsilon(h)<\epsilon$. Hence, if $b_0>0$ we have
\[
\nu(\partial_{b_0}h)\geq \nu(h)-b_0\epsilon(h)>\nu(h)-b_0\epsilon.
\]
On the other hand, for every $i=1,\ldots, r$, by definition of $\epsilon$ we have
\[
\nu(\partial_{b_i}Q)\geq \nu(Q)-b_i\epsilon,
\]
and if $b_i\notin I(Q)$ we have
\[
\nu(\partial_{b_i}Q)> \nu(Q)-b_i\epsilon.
\]
Since $\nu_Q(\partial_{b_0}h)=\nu(\partial_{b_0}h)$ and $\nu_Q(\partial_{b_i}Q)=\nu(\partial_{b_i}Q)$, we have
\begin{displaymath}
\begin{array}{rcl}
\nu_Q(T_b(b_0,\ldots,b_r))&=&\displaystyle\nu_Q\left(\partial_{b_0}h\left(\prod_{i=1}^r\partial_{b_i}Q\right)Q^{n-r}\right)\\
                        &=&\displaystyle\nu_Q(\partial_{b_0}h)+\sum_{i=1}^r\nu_Q(\partial_{b_i}Q)+(n-r)\nu_Q(Q)\\
                        &\geq &\displaystyle\nu(h)-b_0\epsilon+\sum_{i=1}^r\left(\nu(Q)-b_i\epsilon\right)+(n-r)\nu(Q)\\
                        &\geq &\nu(hQ^n)-b\epsilon.
\end{array}
\end{displaymath}
Moreover, if $b_0>0$ or $b_i\notin I(Q)$ for some $i=1,\ldots, r$, then the inequality above is strict.
\end{proof}

\begin{Cor}\label{Coroaboutderib}
For every $b\in\N$ we have $\nu_Q\left(\partial_b(aQ^n)\right)\geq\nu(aQ^n)-b\epsilon$.
\end{Cor}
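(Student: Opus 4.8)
The plan is simply to feed the termwise estimate of Lemma~\ref{Lemamagic3} through the Leibnitz expansion of $\partial_b(aQ^n)$ recorded just above that lemma. Since $\deg(a)<\deg(Q)$, the polynomial $a$ plays here the role of $h$ in Lemma~\ref{Lemamagic3}, so all of that lemma's hypotheses are available.

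Concretely, I would argue as follows. By the Leibnitz rule,
\[
\partial_b(aQ^n)=\sum_{b_0+\ldots+b_r=b}T_b(b_0,\ldots,b_r).
\]
By Proposition~\ref{proptruncakeypolval}, $\nu_Q$ is a valuation; in particular it satisfies the ultrametric inequality on finite sums, so
\[
\nu_Q\bigl(\partial_b(aQ^n)\bigr)\geq\min_{b_0+\ldots+b_r=b}\nu_Q\bigl(T_b(b_0,\ldots,b_r)\bigr).
\]
Lemma~\ref{Lemamagic3} bounds each term on the right below by $\nu(aQ^n)-b\epsilon$, and this is exactly the asserted inequality.

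There is no genuine obstacle: the statement is a direct corollary, and the only points to keep track of are that the ultrametric inequality for $\nu_Q$ extends from two summands to the finitely many summands of the Leibnitz expansion (immediate once one knows $\nu_Q$ is a valuation, or even just subadditive, as noted at the start of the proof of Proposition~\ref{proptruncakeypolval}), and that $\deg(a)<\deg(Q)$ so that Lemma~\ref{Lemamagic3} does apply. This estimate is precisely what will let one pass, in the proof of Proposition~\ref{Propdificil}, from a bound on $\partial_b(f_iQ^i)$ for each piece of the $Q$-standard expansion of $f$ to the global bound $\nu_Q(\partial_bf)\geq\nu_Q(f)-b\epsilon$.
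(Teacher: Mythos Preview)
Your argument is correct and is exactly the one the paper intends: the corollary is stated without proof because it follows immediately from Lemma~\ref{Lemamagic3} applied termwise to the Leibnitz expansion, together with the ultrametric inequality for $\nu_Q$. Your remark that $\deg(a)<\deg(Q)$ (so that $a$ plays the role of $h$) and that subadditivity of $\nu_Q$ suffices are both apt.
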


\begin{proof}[Proof of Proposition \ref{Propdificil}]
\textbf{(i)} Take any $f\in K[x]$ and consider its $Q$-standard expansion $f=f_0+f_1Q+\ldots+f_nQ^n$. For each $i=0,\ldots,n$, Corollary \ref{Coroaboutderib} gives us that
\[
\nu_Q\left(\partial_b(f_iQ^i)\right)\geq \nu (f_iQ^i)-b\epsilon.
\]
Hence,
\[
\nu_Q\left(\partial_b(f)\right)\geq\min_{0\leq i\leq n}\{\nu_Q(f_iQ^i)\}\geq\min_{0\leq i\leq n}\{\nu(f_iQ^i)-b\epsilon\}=\nu_Q(f)-b\epsilon.
\]
\textbf{(ii)} Assume that $S_Q(f)\neq \{0\}$ and set $j_0=\min S_Q(f)$. Then $j_0=p^er$ for some $e\in\N\cup\{0\}$  and some $r\in\N$ with $(r,p)=1$. We set $b:=p^eb(Q)$ and will prove that $\nu_Q(\partial_b(f))=\nu_Q(f)-b\epsilon$.

Write
\[
f_{j_0}\left(\partial_{b(Q)}Q\right)^{p^e}=rQ+h
\]
for some $r,h\in K[x]$ and $\deg(h)<\deg(Q)$ (note that $h\ne0$ because $Q$ is irreducible and $Q\nmid f_{j_0}$ and $Q\nmid \partial_{b(Q)}Q$). Then Lemma \ref{lemaonkeypollder} \textbf{(iii)} gives us that
\[
\nu(h)=\nu\left(f_{j_0}(\partial_{b(Q)}Q)^{p^e}\right).
\] 
This implies that
\begin{equation}\label{equationboa}
\nu\left(hQ^{j_0-p^e}\right)=\nu_Q(f)-b\epsilon.
\end{equation}
Indeed, we have
\begin{displaymath}
\begin{array}{rcl}
\nu\left(hQ^{j_0-p^e}\right)&=& \nu(h)+\nu\left(Q^{j_0-p^e}\right)=\nu\left(f_{j_0}(\partial_{b(Q)}Q)^{p^e}\right)+
\nu\left(Q^{j_0-p^e}\right)\\
                            &=&\nu(f_{j_0})+p^e\nu\left(\partial_{b(Q)}Q\right)+(j_0-p^e)\nu(Q)\\
                            &=&\nu(f_{j_0})+p^e\left(\nu\left(Q\right)-b(Q)\epsilon\right)+(j_0-p^e)\nu(Q)\\
                            &=&\nu(f_{j_0})+j_0\nu(Q)-p^eb(Q)\epsilon\\
                            &=&\nu(f_{j_0}Q^{j_0})-p^eb(Q)\epsilon=\nu_Q(f)-b\epsilon.\\
                            
\end{array}
\end{displaymath}

Since $f=f_0+f_1Q+\ldots+f_nQ^n$, we have $\partial_b(f)=\partial_b(f_0)+\partial_b(f_1Q)\ldots+\partial_b(f_nQ^n)$. For each $j=0,\ldots, n$, if $j\notin S_Q(f)$, then
\[
\nu_Q\left(\partial_b(f_jQ^j)\right)\geq \nu_Q(f_jQ^j)-b\epsilon>\nu_Q(f)-b\epsilon.
\]
We set
\[
h_1=\sum_{j\notin S_Q(f)}f_jQ^j.
\]
Then $\nu_Q(h_1)>\nu_Q(f)-b\epsilon$.

For each $j\in S_Q(f)$ the term $\partial_b(f_jQ^j)$ can be written as a sum of terms of the form $T_b(b_0,\ldots,b_r)$. For each $T_b(b_0,\ldots,b_r)$ we have the following cases:

\textbf{Case 1:} $b_0>0$ or $b_i\notin I(Q)$ for some $i$.\\
In this case, by Lemma \ref{Lemamagic3} we have $\nu_Q(T_b(b_0,\ldots,b_r))>\nu_Q(f)-b\epsilon$. In particular, if $h_2$ is the sum of all these terms, then $\nu_Q(h_2)>\nu_Q(f)-b\epsilon$.

\textbf{Case 2:} $b_0=0$ and $b_i\in I(Q)$ for every $i=1,\ldots,r$ but $b_{i_0}\neq b(Q)$ for some $i_0=1,\ldots,r$.\\
This implies, in particular, that $j\geq j_0$ and since $b=p^eb(Q)$ we must have $r<p^e$. Hence
\[
T_b(b_0,b_1,\ldots,b_r)=\partial_{b_0}f_j\left(\prod_{i=1}^r\partial_{b_i}Q\right)Q^{j-r}=sQ^{j_0-p^e+1}
\]
for some $s\in K[x]$.

\textbf{Case 3:} $b_0=0$, $j>j_0$ and $b_i=b(Q)$ for every $i=1,\ldots,r$.\\
Since $b=p^eb(Q)$, $b_i=b(Q)$ and $\displaystyle\sum_{i=1}^rb_i=b$ we must have $r=p^e$. Hence
\[
T_b(b_0,b_1,\ldots,b_r)=f_j\left(\partial_{b(Q)}Q\right)^{p^e}Q^{j-p^e}=s'Q^{j_0-p^e+1}
\]
for some $s'\in K[x]$.\\

\textbf{Case 4:} $b_0=0$, $j=j_0$ and $b_i=b(Q)$ for every $i=1,\ldots,r$.\\
In this case we have
\begin{equation}\label{caseintport}
\begin{array}{rcl}
T_b(b_0,b_1,\ldots,b_r)&=&f_{j_0}\left(\partial_{b(Q)}Q\right)^{p^e}Q^{j_0-p^e}\\
                       &=&\left(h-rQ\right)Q^{j_0-p^e}\\
                       &=&hQ^{j_0-p^e}-rQ^{j_0-p^e+1}.
\end{array}
\end{equation}

Observe that the number of times that the term (\ref{caseintport}) appears in $\partial_b(f_{j_0}Q^{j_0})$ is $\binom{j_0}{p^e}$, that is, the number of ways that one can choose a subset with $p^e$ elements in a set of $j_0$ elements.

Therefore, we can write
\[
\partial_b(f)=\binom{j_0}{p^e}hQ^{j_0-p^e}+\left(s+s'-\binom{j_0}{p^e}r\right)Q^{j_0-p^e+1}+h_1+h_2
\]
Since $p\nmid \binom{j_0}{p^e}$ the equation (\ref{equationboa}) gives us that
\[
\nu\left(\binom{j_0}{p^e}hQ^{j_0-p^e}\right)=\nu_Q(f)-b\epsilon.
\]
Then
\[
\nu_Q\left(\binom{j_0}{p^e}hQ^{j_0-p^e}+\left(s+s'-\binom{j_0}{p^e}r\right)Q^{j_0-p^e+1}\right)\leq \nu_Q(f)-b\epsilon.
\]
This and the fact that $\nu_Q(h_1+h_2)>\nu_Q(f)-b\epsilon$ imply that $\nu_Q\left(\partial_b(f)\right)\leq\nu_Q(f)-b\epsilon$. This concludes the proof of \textbf{(ii)}.

\textbf{(iii)} The assumptions on $b$ give us
\[
\frac{\nu_Q(f)-\nu_Q(\partial_bf)}{b}= \epsilon
\]
and
\[
\nu_Q(\partial_bf)=\nu(\partial_bf).
\]
Consequently,
\[
\epsilon(f)\geq \frac{\nu(f)-\nu(\partial_bf)}{b}\geq\frac{\nu_Q(f)-\nu_Q(\partial_bf)}{b}= \epsilon.
\]
In the inequality above, one can see that if $\nu(f)>\nu_Q(f)$, then $\epsilon(f)>\epsilon$.

\end{proof}

\begin{Prop}\label{Propcompkeypol}
For two key polynomials $Q,Q'\in K[x]$ we have the following:
\begin{description}
\item[(i)] If $\deg(Q)<\deg(Q')$, then $\epsilon(Q)<\epsilon(Q')$;
\item[(ii)] If $\epsilon(Q)<\epsilon(Q')$, then $\nu_Q(Q')<\nu(Q')$;
\item[(iii)] If $\deg(Q)=\deg(Q')$, then
\begin{equation}\label{eqwhdegsame}
\nu(Q)<\nu(Q')\Llr \nu_Q(Q')<\nu(Q')\Llr \epsilon(Q)<\epsilon(Q').
\end{equation}
\end{description}
\end{Prop}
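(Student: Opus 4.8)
The plan is to dispatch the three items in the order stated, drawing only on the definition of a key polynomial for \textbf{(i)}, on Proposition \ref{Propdificil} \textbf{(i)} for \textbf{(ii)}, and on the $Q$-standard expansion of $Q'$ together with parts \textbf{(i)}--\textbf{(ii)} for \textbf{(iii)}. Part \textbf{(i)} is immediate: since $Q'$ is a key polynomial of level $\epsilon(Q')$, the contrapositive of its defining property says that $\deg(f)<\deg(Q')$ implies $\epsilon(f)<\epsilon(Q')$, and taking $f=Q$ gives $\epsilon(Q)<\epsilon(Q')$.

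For \textbf{(ii)} I would argue by contraposition, proving that $\nu_Q(Q')=\nu(Q')$ forces $\epsilon(Q')\le\epsilon(Q)$. First I record the elementary inequality $\nu_Q(h)\le\nu(h)$ for all $h\in K[x]$, obtained by applying the ultrametric inequality to the $Q$-standard expansion of $h$. Now choose $b\in\N$ realizing the maximum defining $\epsilon(Q')$, so that $\epsilon(Q')=\bigl(\nu(Q')-\nu(\partial_bQ')\bigr)/b$. Proposition \ref{Propdificil} \textbf{(i)} applied to $f=Q'$ (with $\epsilon=\epsilon(Q)$) gives $\nu_Q(\partial_bQ')\ge\nu_Q(Q')-b\,\epsilon(Q)$; combining this with $\nu(\partial_bQ')\ge\nu_Q(\partial_bQ')$ and the hypothesis $\nu_Q(Q')=\nu(Q')$ yields $\nu(\partial_bQ')\ge\nu(Q')-b\,\epsilon(Q)$, that is, $\epsilon(Q')\le\epsilon(Q)$. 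Since $\nu_Q(Q')\le\nu(Q')$ always holds, this proves \textbf{(ii)}.

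For \textbf{(iii)}, write $g:=Q'-Q$; as $Q,Q'$ are monic of the same degree, $\deg(g)<\deg(Q)$, so the $Q$-standard expansion of $Q'$ is $Q'=g+Q$ and $\nu_Q(Q')=\min\{\nu(g),\nu(Q)\}$. I would then establish (\ref{eqwhdegsame}) through four implications. (a) If $\nu(Q)<\nu(Q')$ then $\nu_Q(Q')\le\nu(Q)<\nu(Q')$. (b) If $\nu_Q(Q')<\nu(Q')$ then necessarily $\nu(g)=\nu(Q)$ (otherwise $\nu(Q')=\nu(g+Q)=\min\{\nu(g),\nu(Q)\}=\nu_Q(Q')$), hence $\nu_Q(Q')=\nu(Q)$ and $\nu(Q')=\nu(g+Q)>\nu(Q)$ (equality again being excluded), so $\nu(Q)<\nu(Q')$. (c) If $\nu(Q)<\nu(Q')$ then $\nu(g)=\nu(Q)$ as in (b); since $\deg(g)<\deg(Q)$, inequality (\ref{eqpolyndegsmallkeypol}) gives $\nu(\partial_bg)>\nu(g)-b\epsilon=\nu(Q)-b\epsilon=\nu(\partial_bQ)$ for $\epsilon:=\epsilon(Q)$ and $b\in I(Q)$, whence $\nu(\partial_bQ')=\nu(\partial_bQ)=\nu(Q)-b\epsilon$ and therefore $\epsilon(Q')\ge\bigl(\nu(Q')-\nu(\partial_bQ')\bigr)/b=\epsilon+\bigl(\nu(Q')-\nu(Q)\bigr)/b>\epsilon$. (d) If $\epsilon(Q)<\epsilon(Q')$ then $\nu_Q(Q')<\nu(Q')$ by \textbf{(ii)}, hence $\nu(Q)<\nu(Q')$ by (b). Implications (a) and (b) give $\nu(Q)<\nu(Q')\Leftrightarrow\nu_Q(Q')<\nu(Q')$, (c) gives $\nu(Q)<\nu(Q')\Rightarrow\epsilon(Q)<\epsilon(Q')$, and (d) closes the circle, yielding (\ref{eqwhdegsame}).

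The one step that is not pure formalism is implication (c): when $\deg(Q)=\deg(Q')$ and $\nu(Q)<\nu(Q')$, the cancellation in $Q'=g+Q$ forces $\nu(g)=\nu(Q)$, and one must check that after applying $\partial_b$ with $b\in I(Q)$ the contribution of $Q$ still strictly dominates that of $g$ — which is precisely where the hypothesis that $Q$ is a key polynomial (so that $\deg(g)<\deg(Q)$ yields $\epsilon(g)<\epsilon(Q)$, via (\ref{eqpolyndegsmallkeypol})) enters. Everything else is bookkeeping with the definitions and Proposition \ref{Propdificil} \textbf{(i)}.
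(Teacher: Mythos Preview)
Your proof is correct. Parts \textbf{(i)} and \textbf{(ii)} and the first equivalence in \textbf{(iii)} match the paper's argument almost verbatim (your contrapositive for \textbf{(ii)} is the same chain of inequalities as the paper's direct proof, read backwards).

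The one genuine difference is your implication (c) in \textbf{(iii)}, i.e.\ the step $\nu(Q)<\nu(Q')\Rightarrow\epsilon(Q)<\epsilon(Q')$. The paper proves the logically equivalent implication $\nu_Q(Q')<\nu(Q')\Rightarrow\epsilon(Q)<\epsilon(Q')$ by invoking the heavier machinery of Proposition~\ref{Propdificil}~\textbf{(ii)} and~\textbf{(iii)}: from $S_Q(Q')\neq\{0\}$ one extracts a $b$ for which equality holds in (\ref{eqthatcompvalutrunc}), observes that $\deg(\partial_bQ')<\deg(Q)$ forces $\nu_Q(\partial_bQ')=\nu(\partial_bQ')$, and then applies \textbf{(iii)}. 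Your argument is more elementary: you pick $b\in I(Q)$ directly, use only the basic inequality (\ref{eqpolyndegsmallkeypol}) to see that $\nu(\partial_bg)>\nu(\partial_bQ)$, conclude $\nu(\partial_bQ')=\nu(Q)-b\epsilon$, and read off $\epsilon(Q')>\epsilon$ by hand. This bypasses Proposition~\ref{Propdificil}~\textbf{(ii)} and~\textbf{(iii)} entirely, which is a cleaner route here since the $Q$-expansion of $Q'$ has such a simple form. The paper's approach, on the other hand, illustrates how Proposition~\ref{Propdificil} is meant to be used and would generalize more readily to situations where the $Q$-expansion is not linear.
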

\begin{proof}
Item \textbf{(i)} follows immediately from the the definition of key polynomial (in fact, the same holds if we substitute $Q$ for any $f\in K[x]$).

In order to prove \textbf{(ii)} we set $\epsilon:=\epsilon(Q)$ and $b':=b(Q')$. By \textbf{(i)} of Proposition \ref{Propdificil}, we have
\[
\nu_Q(Q')\leq \nu_Q(\partial_{b'}Q')+b'\epsilon.
\]
Since $\epsilon(Q)<\epsilon(Q')$, we also have
\[
\nu(\partial_{b'}Q')+b'\epsilon< \nu(\partial_{b'}Q')+b'\epsilon(Q')=\nu(Q').
\]
This, and the fact that $\nu_Q(\partial_{b'}Q')\leq \nu(\partial_{b'}Q')$, imply that $\nu_Q(Q')<\nu(Q')$.

Now assume that $\deg(Q)=\deg(Q')$ and let us prove (\ref{eqwhdegsame}). Since
$$
\deg(Q)=\deg(Q')
$$
and both $Q$ and $Q'$ are monic, the $Q$-standard expansion of $Q'$ is given by
$$
Q'=Q+(Q-Q').
$$
Hence
\[
\nu_Q(Q')=\min\{\nu(Q),\nu(Q-Q')\}.
\]
The first equivalence follows immediately from this. In view of part \textbf{(ii)}, it remains to prove that if $\nu_Q(Q')<\nu(Q')$, then $\epsilon(Q)<\epsilon(Q')$. Since $\nu_Q(Q')<\nu(Q')$ we have $S_Q(Q')\neq \{0\}$. Hence, by Proposition \ref{Propdificil} \textbf{(ii)}, the equality holds in (\ref{eqthatcompvalutrunc}) (for $f=Q'$) for some $b\in\N$. Moreover, since $\deg(Q)=\deg(Q')$, we have
$\deg(\partial_bQ')<\deg(Q)$ and consequently $\nu_Q(\partial_bQ')=\nu(\partial_bQ')$. Then Proposition \ref{Propdificil} \textbf{(iii)} implies that $\epsilon(Q)<\epsilon(Q')$.
\end{proof}

For a key polynomial $Q\in K[x]$, let
\[
\alpha(Q):=\min\{\deg(f)\mid \nu_Q(f)< \nu(f)\}
\]
(if $\nu_Q=\nu$, then set $\alpha(Q)=\infty$) and
\[
\Psi(Q):=\{f\in K[x]\mid f\mbox{ is monic},\nu_Q(f)< \nu(f)\mbox{ and }\alpha(Q)=\deg (f)\}.
\]
\begin{Lema}\label{lemmapsikeypoly}
If $Q$ is a key polynomial, then every element $Q'\in\Psi(Q)$ is also a key polynomial. Moreover, $\epsilon(Q)<\epsilon(Q')$.
\end{Lema}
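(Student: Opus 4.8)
The plan is to deduce both assertions from Proposition \ref{Propdificil}, after first recording an elementary consequence of the definition of $\alpha(Q)$: for every $g\in K[x]$ with $\deg(g)<\alpha(Q)$ one has $\nu_Q(g)=\nu(g)$. Indeed $\nu_Q(g)\le\nu(g)$ always holds, and strict inequality would contradict the minimality of $\alpha(Q)$. Writing $\epsilon:=\epsilon(Q)$, this applies in particular to $g=\partial_b Q'$ for every $b\ge 1$, since $\deg(\partial_b Q')<\deg(Q')=\alpha(Q)$, and also to any $f$ (together with its derivatives $\partial_b f$) of degree $<\deg(Q')$.

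First I would prove that $\epsilon(Q)<\epsilon(Q')$, essentially by repeating the second half of the proof of Proposition \ref{Propcompkeypol} \textbf{(iii)}. Since $Q'\in\Psi(Q)$ we have $\nu_Q(Q')<\nu(Q')$, hence $S_Q(Q')\ne\{0\}$, so Proposition \ref{Propdificil} \textbf{(ii)} provides a $b\in\N$ (with $b\ge 1$, and $\partial_b Q'\ne 0$) for which equality holds in (\ref{eqthatcompvalutrunc}) with $f=Q'$. By the observation above, $\nu_Q(\partial_b Q')=\nu(\partial_b Q')$, and since moreover $\nu(Q')>\nu_Q(Q')$, Proposition \ref{Propdificil} \textbf{(iii)} yields $\epsilon(Q')>\epsilon$, as claimed.

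It then remains to show that $Q'$ is a key polynomial, i.e.\ that no $f\in K[x]$ with $\deg(f)<\deg(Q')$ satisfies $\epsilon(f)\ge\epsilon(Q')$. So fix such an $f$. For every $b\ge 1$ we have $\deg(\partial_b f)\le\deg(f)<\alpha(Q)$, hence $\nu_Q(f)=\nu(f)$ and $\nu_Q(\partial_b f)=\nu(\partial_b f)$ by the first paragraph; Proposition \ref{Propdificil} \textbf{(i)} then gives
\[
\frac{\nu(f)-\nu(\partial_b f)}{b}=\frac{\nu_Q(f)-\nu_Q(\partial_b f)}{b}\le\epsilon .
\]
Taking the maximum over $b$ yields $\epsilon(f)\le\epsilon<\epsilon(Q')$, which is exactly what is needed. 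The argument presents no serious obstacle, since all the real work is packaged in Proposition \ref{Propdificil}; the only points that require attention are that the exponent $b$ produced by part \textbf{(ii)} is a positive integer (so that passing to $\partial_b$ drops the degree strictly) and that $\nu_Q$ and $\nu$ coincide on polynomials of degree $<\alpha(Q)$, which is precisely what makes parts \textbf{(i)} and \textbf{(iii)} of that proposition applicable here.
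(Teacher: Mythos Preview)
Your proof is correct and follows essentially the same route as the paper's own argument: both first use Proposition~\ref{Propdificil}~\textbf{(ii)} and \textbf{(iii)} together with the observation that $\nu_Q$ and $\nu$ agree on polynomials of degree $<\alpha(Q)$ to get $\epsilon(Q)<\epsilon(Q')$, and then use Proposition~\ref{Propdificil}~\textbf{(i)} in the same way to bound $\epsilon(f)$ for $\deg(f)<\deg(Q')$. Your explicit preliminary remark that $\nu_Q(g)=\nu(g)$ whenever $\deg(g)<\alpha(Q)$ is a nice clarification of a step the paper leaves implicit.
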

\begin{proof}
By assumption, we have $\nu_Q(Q')<\nu(Q')$, hence $S_{Q}(Q')\neq \{0\}$. This implies, by Proposition \ref{Propdificil} \textbf{(ii)}, that there exists $b\in\N$ such that
\[
\nu_Q(Q')-\nu_Q(\partial_b Q')=b\epsilon(Q).
\]
Since $\deg(\partial_b Q')<\deg(Q')=\alpha(Q)$, we have $\nu_Q(\partial_b Q')=\nu(\partial_b Q')$. Consequently, by Proposition \ref{Propdificil} \textbf{(iii)}, $\epsilon(Q)<\epsilon(Q')$.

Now take any polynomial $f\in K[x]$ such that $\deg(f)<\deg(Q')=\alpha(Q)$. In particular, $\nu_Q(f)=\nu(f)$. Moreover, for every
$b\in\N$, $\deg(\partial_b f)<\deg(Q')= \alpha(Q)$ which implies that $\nu_Q(\partial_bf)=\nu(\partial_b f)$. Then, for every $b\in\N$,
\[
\frac{\nu(f)-\nu(\partial_bf)}{b}=\frac{\nu_Q(f)-\nu_Q(\partial_bf)}{b}\leq \epsilon(Q)<\epsilon(Q').
\]
This implies that $\epsilon(f)<\epsilon(Q')$, which shows that $Q'$ is a key polynomial.

\end{proof}
\begin{Teo}\label{definofkeypol}
A polynomial $Q$ is a key polynomial if and only if there exists a key polynomial $Q_-\in K[x]$ such that $Q\in \Psi(Q_-)$ or the following conditions hold:
\begin{description}
\item[(K1)] $\alpha(Q_-)=\deg (Q_-)$
\item[(K2)] the set $\{\nu(Q')\mid Q'\in\Psi(Q_-)\}$ does not contain a maximal element
\item[(K3)] $\nu_{Q'}(Q)<\nu(Q)$ for every $Q'\in \Psi(Q_-)$
\item[(K4)] $Q$ has the smallest degree among polynomials satisfying \textbf{(K3)}.
\end{description}
\end{Teo}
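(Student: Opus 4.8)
The plan is to prove the two implications in turn, establishing the ``if'' direction first and then invoking it inside the ``only if'' direction. I would dispose of the case $\deg(Q)=1$ at the start (a linear polynomial is a key polynomial by Remark~\ref{exlinearnonlinepolarepsk}\,\textbf{(i)}) and assume $\deg(Q)>1$. The only ingredient beyond Lemma~\ref{lemmapsikeypoly}, Proposition~\ref{proptruncakeypolval}, Proposition~\ref{Propdificil} and Proposition~\ref{Propcompkeypol} will be a careful bookkeeping of the key polynomials of degree $<\deg(Q)$ organised by their level $\epsilon(\cdot)$.

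For the ``if'' direction: if $Q\in\Psi(Q_-)$ for a key polynomial $Q_-$, then $Q$ is a key polynomial immediately by Lemma~\ref{lemmapsikeypoly}. If instead \textbf{(K1)}--\textbf{(K4)} hold for a key polynomial $Q_-$, write $d_-=\deg(Q_-)$; by \textbf{(K1)} we have $\alpha(Q_-)=d_-$ and every element of $\Psi(Q_-)$ is monic of degree $d_-$. I would first observe, using \textbf{(K2)}, that no polynomial of degree $\le d_-$ satisfies \textbf{(K3)} (for degree $<d_-$ the $Q'$-expansion is trivial; for degree $=d_-$ choose $Q'\in\Psi(Q_-)$ with $\nu(Q')$ larger than the value of the polynomial), so $\deg(Q)>d_-$ by \textbf{(K4)}. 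To show $Q$ is a key polynomial I must check $\epsilon(f)<\epsilon(Q)$ whenever $\deg(f)<\deg(Q)$. For such $f$, \textbf{(K4)} produces $Q'\in\Psi(Q_-)$ with $\nu_{Q'}(f)=\nu(f)$; since $Q'$ is a key polynomial (Lemma~\ref{lemmapsikeypoly}) and $\nu_{Q'}$ is a valuation (Proposition~\ref{proptruncakeypolval}), Proposition~\ref{Propdificil}\,\textbf{(i)} gives $\nu(f)-\nu_{Q'}(\partial_bf)\le b\,\epsilon(Q')$ for all $b$, and combining this with the trivial bound $\nu_{Q'}(\partial_bf)\le\nu(\partial_bf)$ yields $\epsilon(f)\le\epsilon(Q')$. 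Since $\deg(Q')=d_-<\deg(Q)$ and $Q$ is a key polynomial, $\epsilon(Q')<\epsilon(Q)$, and we are done.

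For the ``only if'' direction I would set $\mathcal L=\{\epsilon(Q')\mid Q'\text{ a key polynomial},\ \deg(Q')<\deg(Q)\}$, which is nonempty (it contains $\epsilon(x-a)$) and, by Proposition~\ref{Propcompkeypol}\,\textbf{(i)}, bounded above by $\epsilon(Q)$, and split into two cases. If $\mathcal L$ has a maximum realised by a key polynomial $Q_-$, I claim $Q\in\Psi(Q_-)$: here $\epsilon(Q_-)<\epsilon(Q)$, so $\nu_{Q_-}(Q)=\nu(Q)$ would force $\epsilon(Q)\le\epsilon(Q_-)$ via Proposition~\ref{Propdificil}\,\textbf{(i)} and the bound $\nu_{Q_-}(\partial_bQ)\le\nu(\partial_bQ)$ — impossible; so $\nu_{Q_-}(Q)<\nu(Q)$ and $\alpha(Q_-)\le\deg(Q)$. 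If some $f$ with $\deg(f)<\deg(Q)$ had $\nu_{Q_-}(f)<\nu(f)$, then $\Psi(Q_-)$ would contain an element of degree $\alpha(Q_-)\le\deg(f)<\deg(Q)$, a key polynomial (Lemma~\ref{lemmapsikeypoly}) of level $>\epsilon(Q_-)$, contradicting maximality; hence $\alpha(Q_-)=\deg(Q)$ and $Q\in\Psi(Q_-)$. If instead $\mathcal L$ has no maximum, then — using that only finitely many degrees occur below $\deg(Q)$ and that by Proposition~\ref{Propcompkeypol}\,\textbf{(i)} the level strictly increases with degree among key polynomials — there is a largest degree $d_-<\deg(Q)$ carrying key polynomials, and the levels, hence (by Proposition~\ref{Propcompkeypol}\,\textbf{(iii)}) the values, of the key polynomials of degree $d_-$ have no maximum. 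I would take $Q_-$ to be any key polynomial of degree $d_-$ and verify \textbf{(K1)}--\textbf{(K4)}: \textbf{(K1)} because a higher-level key polynomial of degree $d_-$ lies in $\Psi(Q_-)$ by Proposition~\ref{Propcompkeypol}\,\textbf{(iii)}, forcing $\alpha(Q_-)=d_-$; \textbf{(K2)} because $\Psi(Q_-)$ is, by Lemma~\ref{lemmapsikeypoly} and Proposition~\ref{Propcompkeypol}\,\textbf{(iii)}, exactly the set of key polynomials of degree $d_-$ with value $>\nu(Q_-)$, a final segment of a set without maximum; \textbf{(K3)} because each $Q'\in\Psi(Q_-)$ is a key polynomial of degree $d_-<\deg(Q)$, so $\epsilon(Q')<\epsilon(Q)$ and then $\nu_{Q'}(Q)<\nu(Q)$ by Proposition~\ref{Propcompkeypol}\,\textbf{(i)},\textbf{(ii)}; and \textbf{(K4)} because $Q$ satisfies \textbf{(K3)} while any minimal-degree solution $g$ of \textbf{(K3)} is a key polynomial by the ``if'' direction already proven, and — since no polynomial of degree $\le d_-$ satisfies \textbf{(K3)} — if $\deg(g)<\deg(Q)$ this would yield a key polynomial of degree strictly between $d_-$ and $\deg(Q)$, contradicting the choice of $d_-$; hence $\deg(g)=\deg(Q)$.

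The step I expect to be the main obstacle is the ``only if'' direction in the no-maximum case: identifying the correct degree $d_-$ and, above all, checking \textbf{(K2)} — that is, ruling out a ``best approximant'' of degree $d_-$. This is precisely where the hypothesis ``$\mathcal L$ has no maximum'' must be converted, through Lemma~\ref{lemmapsikeypoly} and Proposition~\ref{Propcompkeypol}\,\textbf{(iii)}, into cofinality of the set of values attained on $\Psi(Q_-)$. Care is also needed to see that the appeal to the already-proven ``if'' direction inside the verification of \textbf{(K4)} is legitimate and not circular.
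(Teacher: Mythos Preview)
Your ``if'' direction has a circular step at the very end. After deriving $\epsilon(f)\le\epsilon(Q')$ (a clean use of Proposition~\ref{Propdificil}\,\textbf{(i)} together with $\nu_{Q'}\le\nu$), you write: ``since $\deg(Q')=d_-<\deg(Q)$ and $Q$ is a key polynomial, $\epsilon(Q')<\epsilon(Q)$''. But that $Q$ is a key polynomial is precisely what you are proving; the implication $\deg(Q')<\deg(Q)\Rightarrow\epsilon(Q')<\epsilon(Q)$ is the \emph{definition} of $Q$ being key, not a fact you may use here. Nothing in your choice of $Q'$ (only $\nu_{Q'}(f)=\nu(f)$) lets you compare $\epsilon(Q')$ with $\epsilon(Q)$.

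The paper closes this gap by choosing $Q'$ more carefully: in addition to $\nu_{Q'}(f)=\nu(f)$ it requires $\nu_{Q'}(\partial_bQ)=\nu(\partial_bQ)$ for every $b\ge1$. Then $\nu_{Q'}(Q)<\nu(Q)$ by \textbf{(K3)}, so $S_{Q'}(Q)\neq\{0\}$, and Proposition~\ref{Propdificil}\,\textbf{(ii)} and \textbf{(iii)} give $\epsilon(Q')<\epsilon(Q)$ with no appeal to $Q$ being key. To obtain a single $Q'$ working simultaneously for $f$ and for the finitely many nonzero $\partial_bQ$, one uses \textbf{(K2)} together with the elementary monotonicity $\nu_{Q'_1}\le\nu_{Q'_2}$ pointwise whenever $Q'_1,Q'_2\in\Psi(Q_-)$ have $\nu(Q'_1)<\nu(Q'_2)$ (expand any $g$ in powers of $Q'_1$ and use that $\nu_{Q'_2}(Q'_1)=\nu(Q'_1)$). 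With this repair your bound $\epsilon(f)\le\epsilon(Q')$ finishes the argument.

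Two minor points. First, ``disposing of $\deg(Q)=1$'' does not settle the ``only if'' direction there: your set $\mathcal L$ is empty when $\deg(Q)=1$, so neither of your two cases applies, and you must still exhibit $Q_-$. The paper takes $Q_-=x-b$ with $\nu(b)<\min\{\nu(a),\nu(x)\}$ (using nontriviality of $\nu$ on $K$) and checks $Q\in\Psi(Q_-)$. Second, your ``only if'' direction is correct and essentially the paper's argument reorganised: the paper splits according to whether the set $\mathcal S=\{Q'\text{ key}:\nu_{Q'}(Q)<\nu(Q)\}$, ordered lexicographically by $(\deg,\nu)$, has a maximum, while you split on whether the set $\mathcal L$ of levels has one; the two are equivalent via Proposition~\ref{Propcompkeypol}.
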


\begin{proof}
We will prove first that if such $Q_-$ exists, then $Q$ is a key polynomial. The case when $Q\in \Psi(Q_-)$ follows from Lemma \ref{lemmapsikeypoly}. Assume now that \textbf{(K1) - (K4)} hold. Take $f\in K[x]$ such that $\deg(f)<\deg(Q)$. This implies that $\deg(\partial_bQ)<\deg(Q)$ and $\deg(\partial_bf)<\deg(Q)$ for every $b\in\N$. Hence, by \textbf{(K4)}, there exists $Q'\in\Psi(Q_-)$ such that
\[
\nu_{Q'}(f)=\nu(f), \nu_{Q'}(\partial_bf)=\nu(\partial_bf) \mbox{ and }\nu_{Q'}(\partial_bQ)=\nu(\partial_bQ)\mbox{ for every }b\in\N.
\]
We claim that $\epsilon(Q')<\epsilon(Q)$. If not, by Proposition \ref{Propcompkeypol} \textbf{(i)}, we would have
$\deg(Q)\leq\deg(Q')$. Since $\nu_{Q'}(Q)<\nu(Q)$, this implies that $\deg(Q)=\deg(Q')$. This and Proposition \ref{Propcompkeypol} \textbf{(iii)} give us that $\epsilon(Q')<\epsilon(Q)$ which is a contradiction.

Now,
\[
\epsilon(f)\leq \frac{\nu(f)-\nu(\partial_bf)}{b}=\frac{\nu_{Q'}(f)-\nu_{Q'}(\partial_bf)}{b}\leq\epsilon(Q')<\epsilon(Q).
\]
Hence $Q$ is a key polynomial.

For the converse, take a key polynomial $Q\in K[x]$ and consider the set
\[
\mathcal S:=\{Q'\in K[x]\mid Q'\mbox{ is a key polynomial and }\nu_{Q'}(Q)<\nu(Q)\}.
\]
Observe that $\mathcal S\neq\emptyset$. Indeed, if $\deg(Q)>1$, then every key polynomial $x-a\in\mathcal S$. If $Q=x-a$, then there exits $b\in K$ such that $\nu(b)<\min\{\nu(a),\nu(x)\}$. Therefore, $x-b\in \mathcal S$.

If there exists a key polynomial $Q_-\in \mathcal S$ such that $\deg(Q)=\deg(Q_-)$ , then we have $Q\in \Psi(Q_-)$ and we are done. Hence, assume that every polynomial $Q'\in \mathcal S$ has degree smaller that $\deg(Q)$. 

Assume that there exists $Q_-\in \mathcal S$ such that for every $Q'\in \mathcal S$ we have
\begin{equation}\label{eqmaxphi}
(\deg(Q_-),\nu(Q_-))\geq ((\deg(Q'),\nu(Q'))
\end{equation}
in the lexicographical ordering. We claim that $Q\in \Psi(Q_-)$. If not, there would exist a key polynomial $Q''$ such that $\nu_{Q_-}(Q'')<\nu(Q'')$ and $\deg(Q'')<\deg(Q)$. Since $\deg(Q'')<\deg(Q)$ Proposition \ref{Propcompkeypol} \textbf{(i)} and \textbf{(ii)} give us that $\nu_{Q''}(Q)<\nu(Q)$. Hence $Q''\in \mathcal S$. The inequality (\ref{eqmaxphi}) gives us that $\deg(Q'')\leq\deg(Q_-)$. On the other hand, since $\nu_{Q_-}(Q'')<\nu(Q'')$ we must have $\deg(Q_-)=\deg(Q'')$. Hence, Proposition \ref{Propcompkeypol} \textbf{(iii)} gives us that $\nu(Q_-)<\nu(Q'')$ and this is a contradiction to the inequality (\ref{eqmaxphi}).

Now assume that for every $Q'\in \mathcal S$, there exists $Q''\in \mathcal S$ such that
\begin{equation}\label{eqmaxphihas}
(\deg(Q'),\nu(Q'))<(\deg(Q''),\nu(Q''))
\end{equation}
in the lexicographical ordering. Take $Q_-\in\mathcal S$ such that $\deg(Q_-)\geq \deg(Q')$ for every $Q'\in\mathcal S$. We will show that the conditions \textbf{(K1) - (K4)} are satisfied.  By (\ref{eqmaxphihas}), there exists $Q''\in \mathcal S$ such that
\begin{equation}\label{eqbanolimit}
(\deg(Q_-),\nu(Q_-))<(\deg(Q''),\nu(Q'')).
\end{equation}
In particular, $\deg(Q_-)=\deg(Q'')$ and $\nu(Q_-)<\nu(Q'')$. Proposition \ref{Propcompkeypol} \textbf{(iii)} gives us that
$\nu_{Q_-}(Q'')<\nu(Q'')$. Hence $\alpha(Q_-)=\deg(Q_-)$ and we have proved \textbf{(K1)}. If $Q'\in\Psi(Q_-)$, then
$\deg(Q')=\deg(Q_-)<\deg(Q)$ and hence $\nu_{Q'}(Q)<\nu(Q)$. This implies that $Q'\in\mathcal S$. The equation (\ref{eqbanolimit}) tells us that $\{\nu(Q')\mid Q'\in\Psi(Q_-)\}$ has no maximum, so we have proved \textbf{(K2)}. Now take any element $Q'\in\Psi(Q_-)$. Then $\deg(Q')<\deg(Q)$ and Proposition \ref{Propcompkeypol} \textbf{(i)} and \textbf{(ii)} give us that $\nu_{Q'}(Q)<\nu(Q)$. This proves \textbf{(K3)}. Take a polynomial $\widetilde{Q}$ with $\nu_{Q'}(\widetilde{Q})<\nu(\widetilde{Q})$ for every $Q'\in\Psi(Q_-)$ with minimal degree possible. We want to prove that $\deg(\widetilde Q)=\deg(Q)$. Assume, aiming for a contradiction, that
$\deg(\widetilde{Q})<\deg(Q)$. The first part of the proof gives us that $\widetilde Q$ is a key polynomial. Fix $Q'\in\Psi(Q_-)$. Then
$\nu_{Q'}(\widetilde Q)<\nu(\widetilde Q)$ and consequently $\deg(\widetilde{Q})=\deg(Q')=\deg(Q_-)$. Therefore
$\nu(Q')<\nu(\widetilde Q)$ for every $Q'\in \Psi(Q_-)$, which is a contradiction to (\ref{eqmaxphihas}). This concludes our proof.
\end{proof}

\begin{Def}
When conditions \textbf{(K1) - (K4)} of Theorem \ref{definofkeypol} are satisfied, we say that $Q$ is a \textbf{limit key polynomial}.
\end{Def}

\begin{Obs}
Observe that as a consequence of the proof we obtain that
$$
\epsilon(Q_-)<\epsilon(Q).
$$
\end{Obs}

\begin{proof}[Proof of Theorem \ref{Theoremexistencecompleteseqkpol}]
Consider the set
\[
\Gamma_0:=\{\nu(x-a)\mid a\in K\}.
\]
We have two possibilities:

\begin{itemize}
\item $\Gamma_0$ has a maximal element
\end{itemize}
Set $Q_0:=x-a_0$ where $a_0\in K$ is such that $\nu(x-a_0)$ is a maximum of $\Gamma_0$. If $\nu=\nu_{Q_0}$ we are done, so assume that $\nu\neq\nu_{Q_0}$. If the set
\[
\{\nu(Q)\mid Q\in \Psi(Q_0)\}
\]
has a maximum, choose $Q_1\in \Psi(Q_0)$ such that $\nu(Q_1)$ is this maximum. If not, choose $Q_1$ as any polynomial in $\Psi(Q_0)$. Set $\Lambda_1:=\{Q_0,Q_1\}$ (ordered by $Q_0<Q_1$).

\begin{itemize}
\item $\Gamma_0$ does not have a maximal element
\end{itemize}
For every $\gamma\in \Gamma_0$ set $Q_\gamma:=x-a_\gamma$ for some $a_\gamma\in K$ such that $\nu(x-a_\gamma)=\gamma$. If for every $f\in K[x]$, there exists $\gamma\in \Gamma_0$ such that $\nu(f)=\nu_{Q_\gamma}(f)$ we are done. If not, let $Q$ be a polynomial of minimal degree among all the polynomials for which $\nu_{Q_\gamma}(Q)<\nu(Q)$ for every $\gamma\in \Gamma_0$. If $\alpha(Q)=\deg(Q)$ and the set $\{\nu(Q')\mid Q'\in \Psi(Q)\}$ contains a maximal element, choose $Q_1\in \Psi(Q)$ such that $\nu(Q_1)\geq \nu(Q')$ for every $Q'\in \Psi(Q)$. If not, set $Q_1:=Q$. Set $\Lambda_1:=\{Q_\gamma\mid \gamma\in \Gamma_0\}\cup\{Q_1\}$ (ordered by $Q_1>Q_\gamma$ for every $\gamma\in \Gamma$ and $Q_\gamma>Q_{\gamma'}$ if $\gamma>\gamma'$).

Observe that in either case, $\deg(Q_1)>\deg(Q_0)$ and for $Q,Q'\in \Lambda_1$, $Q<Q'$ if and only if $\epsilon(Q)<\epsilon(Q')$. Moreover, if $\alpha(Q_1)=\deg(Q_1)$, then $\{\nu(Q)\mid Q\in\Psi(Q_1)\}$ does not have a maximum.

Assume that for some $i\in\N$, there exists a totally ordered set $\Lambda_i$ consisting of key polynomials with the following properties:
\begin{description}
\item[(i)] there exist $Q_0,Q_1,\ldots,Q_i\in \Lambda_i$ such that $Q_i$ is the last element of $\Lambda_i$ and $\deg(Q_0)<\deg(Q_1)<\ldots<\deg(Q_i)$.
\item[(ii)] if $\alpha(Q_i)=\deg(Q_i)$, then $\Gamma_i:=\{\nu(Q)\mid Q\in \Psi(Q_i)\}$ does not have a maximum.
\item[(iii)] for $Q,Q'\in \Lambda_i$, $Q<Q'$ if and only if $\epsilon(Q)<\epsilon(Q')$.
\end{description}
If $\nu_{Q_i}\neq \nu$, then we will construct a set $\Lambda_{i+1}$ of key polynomials having the same properties (changing $i$ by
$i+1$).

Since $\nu_{Q_i}\neq \nu$, the set $\Psi(Q_i)$ is not empty. We have two cases:
\begin{itemize}
\item $\alpha(Q_i)>\deg(Q_i)$.
\end{itemize}
If $\Gamma_i$ has a maximum, take $Q_{i+1}\in\Psi(Q_i)$ such that $\nu(Q_{i+1})\geq \Gamma_i$. Otherwise, choose $Q_{i+1}$ to be any element of $\Psi(Q_i)$. Observe that if $\alpha(Q_{i+1})=\deg(Q_{i+1})$, then $\Gamma_{i+1}$ does not have a maximum. Set $\Lambda_{i+1}=\Lambda_i\cup\{Q_{i+1}\}$ with the extension of the order in $\Lambda_i$ obtained by setting $Q_{i+1}>Q$ for every $Q\in \Lambda_i$.
\begin{itemize}
\item $\alpha(Q_i)=\deg(Q_i)$.
\end{itemize}
By assumption, the set $\Gamma_i$ does not have a maximum. For each $\gamma\in\Gamma_i$, choose a polynomial
$Q_\gamma\in \Psi(Q_i)$ such that $\nu(Q_\gamma)=\gamma$. If for every $f\in K[x]$, there exists $\gamma\in \Gamma_i$ such that
$\nu_{Q_\gamma}(f)=\nu(f)$, then we are done. Otherwise, choose a monic polynomial $Q$, of smallest degree possible, such that
$\nu_{Q'}(Q)<\nu(Q)$ for every $Q'\in\Psi(Q_i)$. If $\alpha(Q)=\deg(Q)$ and $\{\nu(Q')\mid Q'\in\Psi(Q)\}$ has a maximum, we choose $Q_{i+1}$ such that $\nu(Q_{i+1})\geq\{\nu(Q')\mid Q'\in\Psi(Q)\}$. Otherwise we set $Q_{i+1}=Q$. Then set
\[
\Lambda_{i+1}:=\Lambda_i\cup\{Q_\gamma\mid \gamma\in\Gamma_i\}\cup\{Q_{i+1}\},
\]
with the extension of the order of $\Lambda_i$ given by
$$
Q_{i+1}>Q'\mbox{ for every }Q'\in \Lambda_{i+1}\setminus\{Q_{i+1}\},
$$
$Q_\gamma> Q'$ for every $\gamma\in \Gamma_i$ and $Q'\in\Lambda_i$ and $Q_\gamma>Q_{\gamma'}$ for $\gamma,\gamma'\in \Gamma_i$ with $\gamma>\gamma'$.

In all cases, the set $\Lambda_{i+1}$ has the properties \textbf{(i)}, \textbf{(ii)} and \textbf{(iii)}.

Assume now that for every $i\in\N$ the sets $\Lambda_i$ and $\Lambda_{i+1}$ can be constructed. Then we can construct a set 
\[
\Lambda_\infty:=\bigcup_{i=1}^\infty\Lambda_i
\]
of key polynomials having the property that for $Q,Q'\in \Lambda_\infty$, $Q<Q'$ if and only if $\epsilon(Q)<\epsilon(Q')$ and there are polynomials $Q_0,\ldots,Q_i,\ldots\in \Lambda_\infty$ such that
$$
\deg(Q_{i+1})>\deg(Q_i)
$$
for every $i\in\N$. This means that for every $f\in K[x]$ there exists $i\in\N$ such that $\deg(f)<\deg(Q_i)$, which implies that
$\nu_{Q_i}(f)=\nu(f)$. Therefore, $\Lambda_\infty$ is a complete set of key polynomials for $\nu$.
\end{proof}

Observe that at each stage, the same construction would work if we replaced $\Gamma_i$ by any cofinal set $\Gamma_i'$ of
$\Gamma_i$. Hence, if the rank of $\nu$ is equal to 1, then we can choose $\Gamma_i'$ to have order type at most $\omega$. Then, from the construction of the sets $\Lambda_i$ and $\Lambda_\infty$, we can conclude the following:
\begin{Cor}
If the rank of $\nu$ is equal to one, then there exists a complete sequence of key polynomials of $\nu$ with order type at most $\omega\times\omega$.
\end{Cor}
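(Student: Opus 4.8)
The plan is to treat this Corollary as a bookkeeping refinement of the proof of Theorem \ref{Theoremexistencecompleteseqkpol}, exploiting the observation recorded just before the statement (that the construction is unaffected if at each stage $\Gamma_i$ is replaced by any cofinal subset $\Gamma_i'$). Recall that in that proof one builds an increasing chain $\Lambda_1\subseteq\Lambda_2\subseteq\cdots$ of totally ordered sets of key polynomials in which the order on $\Lambda_{i+1}$ extends that of $\Lambda_i$ by declaring every newly adjoined element to be larger than every element of $\Lambda_i$, and the complete set is the ordinal concatenation $\Lambda_\infty=\bigcup_i\Lambda_i$. The only step that can adjoin infinitely many elements when passing from $\Lambda_i$ to $\Lambda_{i+1}$ is the case $\alpha(Q_i)=\deg(Q_i)$, where one adjoins the family $\{Q_\gamma\mid\gamma\in\Gamma_i\}$ indexed by $\Gamma_i=\{\nu(Q')\mid Q'\in\Psi(Q_i)\}$ (which by construction has no maximal element) and then one further element $Q_{i+1}$; in all other cases, and at the initial stage except for the analogous first step built from $\Gamma_0=\{\nu(x-a)\mid a\in K\}$, only finitely many elements are added.

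The substantive input is the rank hypothesis. First I would note that a rank-one valuation has archimedean value group, which therefore order-embeds into $(\R,+)$; consequently any subset of the value group without a last element — in particular each $\Gamma_i$ (and $\Gamma_0$, if it has no maximum) — admits a cofinal subset of order type $\omega$. Choosing each $\Gamma_i'$ to be such a subset, the cited observation guarantees that the construction goes through verbatim and still produces a complete set of key polynomials: the distinguished polynomials $Q_0,Q_1,\ldots$ still have strictly increasing degrees, so for every $f\in K[x]$ some $Q_i$ satisfies $\deg(f)<\deg(Q_i)$ and hence $\nu_{Q_i}(f)=\nu(f)$.

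It remains to count the order type. With the above choice, each of $\Lambda_1$ and each difference $\Lambda_{i+1}\setminus\Lambda_i$, viewed as an ordered set, is either a single element or a copy of $\omega$ followed by one more element, hence has order type at most $\omega+1$. Since $\Lambda_\infty$ is the ordinal concatenation $\Lambda_1\oplus\bigoplus_{i\ge1}\big(\Lambda_{i+1}\setminus\Lambda_i\big)$ of countably many such blocks, its order type is at most $(\omega+1)+(\omega+1)+\cdots=(\omega+1)\cdot\omega=\omega\times\omega$, as claimed. (That $(\omega+1)\cdot\omega=\omega\times\omega$ is the routine identity $\sup_n(\omega\cdot n+1)=\omega\cdot\omega$.)

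I do not expect a serious obstacle: the proof is essentially the assembly of the cited observation, the cofinality statement for archimedean groups, and the ordinal arithmetic above. The one point genuinely worth spelling out — and the content underlying the cited observation — is that replacing $\Gamma_i$ by a cofinal subset does not spoil the internal completeness test at the stage $\alpha(Q_i)=\deg(Q_i)$ (namely whether every $f$ has its value fixed by some $\nu_{Q_\gamma}$, and, if not, which $Q$ of minimal degree is chosen). This reduces to the monotonicity of $\gamma\mapsto\nu_{Q_\gamma}(f)$ along $\Gamma_i$ for the key polynomials $Q_\gamma\in\Psi(Q_i)$ — all of the same degree with $\nu(Q_\gamma)=\gamma$ increasing — together with the fact that $\nu_{Q_\gamma}\le\nu$ always; once that monotonicity is in hand, cofinality of $\Gamma_i'$ in $\Gamma_i$ makes the two tests equivalent, and the argument proceeds exactly as in Theorem \ref{Theoremexistencecompleteseqkpol} (compare Proposition \ref{Propcompkeypol}).
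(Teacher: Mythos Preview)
Your proposal is correct and follows essentially the same approach as the paper, which derives the Corollary directly from the observation (recorded just before the statement) that the construction in the proof of Theorem \ref{Theoremexistencecompleteseqkpol} works verbatim with any cofinal subset $\Gamma_i'\subseteq\Gamma_i$, together with the rank-one hypothesis to choose each $\Gamma_i'$ of order type at most $\omega$. Your write-up is simply more explicit about why rank one yields such cofinal subsets and about the ordinal arithmetic $(\omega+1)\cdot\omega=\omega\cdot\omega$, but the underlying argument is identical.
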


\section{Pseudo-convergent sequences}
The next two theorems justify the definitions of algebraic and transcendental pseudo-convergent sequences.
\begin{Teo}[Theorem 2 of \cite{Kap}]
If $\{a_\rho\}_{\rho<\lambda}$ is a pseudo-convergent sequence of transcendental type, without a limit in $K$, then there exists an immediate transcendental extension $K(z)$ of $K$ defined by setting $\nu(f(z))$ to be the value $\nu(f(a_{\rho_f}))$ as in condition (\ref{condforpscstotra}). Moreover, for every valuation $\mu$ in some extension $K(u)$ of $K$, if $u$ is a pseudo-limit of $\{a_\rho\}_{\rho<\lambda}$, then there exists a value preserving $K$-isomorphism from $K(u)$ to $K(z)$ taking $u$ to $z$.
\end{Teo}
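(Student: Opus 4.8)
The plan is to construct the extension directly on $K[x]$, read off immediateness, and deduce uniqueness from a single structural lemma about pseudo-limits, which I expect to be the main obstacle. First I would define $w\colon K[x]\to\Gamma$ by $w(f):=\nu(f(a_{\rho_f}))$, where $\rho_f<\lambda$ is any index for which (\ref{condforpscstotra}) holds; such an index exists for \emph{every} $f\in K[x]$ precisely because the sequence is of transcendental type, and $w$ is well defined because (\ref{condforpscstotra}) forces $\nu(f(a_\sigma))$ to be constant for $\sigma\ge\rho_f$. To check that $w$ is a valuation, given $f,g\in K[x]$ I would pick a single $\rho<\lambda$ exceeding $\rho_f,\rho_g,\rho_{fg}$ and $\rho_{f+g}$; then $w(fg)=\nu\bigl(f(a_\rho)g(a_\rho)\bigr)=w(f)+w(g)$ and $w(f+g)=\nu\bigl(f(a_\rho)+g(a_\rho)\bigr)\ge\min\{w(f),w(g)\}$. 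Since $w$ takes values in $\Gamma$ it extends uniquely to a valuation on $K(x)$, still denoted $w$; writing $z$ for the image of $x$, this $w$ restricts to $\nu$ on $K$ (constants are fixed) and $z$ is transcendental over $K$.

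The heart of the matter is the following lemma, which I would prove for \emph{any} valued extension $(L,\mu)\supseteq(K,\nu)$ and any $u\in L$ that is a pseudo-limit of $\{a_\rho\}_{\rho<\lambda}$: if the value of $f\in K[x]$ is fixed by $\{a_\rho\}_{\rho<\lambda}$, then $\mu(f(u))=\nu(f(a_{\rho_f}))$ and, in addition, $\mu\bigl(f(u)-f(a_\rho)\bigr)>\nu(f(a_{\rho_f}))$ for $\rho$ large. One expands $f(u)=\sum_{i\ge0}\partial_if(a_\rho)\,(u-a_\rho)^i$ and uses $\mu(u-a_\rho)=\gamma_\rho$, so the $i$-th summand has $\mu$-value $\nu(\partial_if(a_\rho))+i\gamma_\rho$; since each $\partial_if$ with $i\ge1$ has degree $<\deg f$, the values $\nu(\partial_if(a_\rho))$ stabilize for $\rho$ large (transcendental type again). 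The delicate point --- the main obstacle --- is to show that for $\rho$ large the stabilized constant term $\nu(f(a_\rho))$ is \emph{strictly} smaller than every $\nu(\partial_if(a_\rho))+i\gamma_\rho$ with $i\ge1$. Because $\{\gamma_\rho\}$ is strictly increasing, each pairwise comparison among the finitely many relevant values $\nu(\partial_if(a_\rho))+i\gamma_\rho$ $(i\ge0)$ has a definite outcome for all large $\rho$, so there is a unique minimum among them whose minimizing index is independent of $\rho$; if that index were some $i\ge1$, then $\nu(f(a_{\rho+1}))$ would equal $\nu(\partial_if(a_\rho))+i\gamma_\rho$, a strictly increasing function of $\rho$, contradicting that the value of $f$ is fixed. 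Hence the minimizing index is $0$, which gives both assertions.

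Granting the lemma, the rest is short. The extension $(K(z),w)/(K,\nu)$ is immediate: its value group is $\Gamma$ because $w$ takes values in $\Gamma$, and if $r=f(z)/g(z)$ has $w(r)=0$ then, by the lemma applied with $u=z$, one has $f(z)\equiv f(a_\rho)$ and $g(z)\equiv g(a_\rho)$ modulo elements of positive value for $\rho$ large, so the residue of $r$ equals that of $f(a_\rho)/g(a_\rho)\in K$ and the residue field is unchanged. For uniqueness, let $\mu$ be a valuation on an extension $K(u)$ of $K$ with $\mu|_K=\nu$ and $u$ a pseudo-limit of $\{a_\rho\}_{\rho<\lambda}$. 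Since the sequence is of transcendental type, every $f\in K[x]$ has its value fixed, so the lemma yields $\mu(f(u))=\nu(f(a_{\rho_f}))=w(f(z))<\infty$ for every $f\neq0$; in particular $u$ is transcendental over $K$. Consequently there is a unique $K$-algebra isomorphism $\phi\colon K(z)\to K(u)$ with $\phi(z)=u$, and for $r=f(z)/g(z)$ we get $\mu(\phi(r))=\mu(f(u))-\mu(g(u))=w(f(z))-w(g(z))=w(r)$, so $\phi$ is value preserving, as required.
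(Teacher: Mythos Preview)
The paper does not give its own proof of this theorem: it is quoted verbatim as Theorem~2 of \cite{Kap} and no argument is supplied. So there is nothing in the paper to compare your proposal against directly.

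That said, your proposal is correct and is essentially Kaplansky's original argument. Your ``main obstacle'' lemma---that for a polynomial $f$ whose value is fixed, the Taylor expansion of $f(u)$ about $a_\rho$ has its minimum-value term uniquely at the constant term for large $\rho$---is precisely the content of Lemma~8 of \cite{Kap}, which the present paper does quote (without proof) as Lemma~\ref{lemmakaplvalpol}. Your proof of this lemma via the observation that the strictly increasing $\gamma_\rho$ force the pairwise comparisons among $\beta_i+i\gamma_\rho$ to eventually stabilize, together with the contradiction of a nonconstant fixed value if the minimizing index were $\ge1$, is the standard argument. The deductions of immediateness (via the residue computation using $\mu(f(u)-f(a_\rho))>\nu(f(a_{\rho_f}))$) and of the value-preserving isomorphism are correct and also follow Kaplansky. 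One small cosmetic point: you do not actually need the clause ``since each $\partial_if$ with $i\ge1$ has degree $<\deg f$''; in the transcendental case \emph{every} polynomial has its value fixed, so the $\partial_if$ stabilize regardless of degree.
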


\begin{Teo}[Theorem 3 of \cite{Kap}]\label{thmonalgimmext}
Let $\{a_\rho\}_{\rho<\lambda}$ be a pseudo-convergent sequence of algebraic type, without a limit in $K$, $q(x)$ a polynomial of smallest degree for which (\ref{condforpscstoalg}) holds and $z$ a root of $q(x)$. Then there exists an immediate algebraic extension of $K$ to $K(z)$ defined as follows: for every polynomial $f(x)\in K[x]$, with $\deg f<\deg q$ we set $\nu(f(z))$ to be the value $\nu(f(a_{\rho_f}))$ as in condition (\ref{condforpscstotra}). Moreover, if $u$ is a root of $q(x)$ and $\mu$ is some extension $K(u)$ of $K$ making $u$ a pseudo-limit of $\{a_\rho\}_{\rho<\lambda}$, then there exists a value preserving $K$-isomorphism from $K(u)$ to $K(z)$ taking $u$ to $z$.
\end{Teo}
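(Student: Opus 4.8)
The plan is to construct the claimed valuation on $K(z)$ by hand, and then to establish immediacy and the uniqueness statement. First I would record that $q$ must be irreducible: if $q=q_1q_2$ were a non-trivial factorization, then by minimality of $\deg q$ each factor $q_i$ would be fixed by $\{a_\rho\}_{\rho<\lambda}$, hence so would $q=q_1q_2$, contradicting that $q$ satisfies (\ref{condforpscstoalg}); the same remark forces $n:=\deg q\ge 2$, since a monic linear $x-a$ with property (\ref{condforpscstoalg}) would make $a\in K$ a limit of the sequence. Thus $L:=K[x]/(q)\cong K(z)$ is a field, finite over $K$, in which every element has a unique representative $g(z)$ with $g\in K[x]$ and $\deg g<n$. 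Since $\deg g<\deg q$, minimality of $\deg q$ and the dichotomy (\ref{condforpscstotra})--(\ref{condforpscstoalg}) put $g$ in case (\ref{condforpscstotra}), so $\nu(g(a_\rho))$ is eventually constant; I would \emph{define} $w(g(z))$ to be that eventual value (and $w(0)=\infty$). By uniqueness of the representative this makes $w\colon L\to\Gamma\cup\{\infty\}$ well defined, and clearly $w|_K=\nu$.

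The key step is to show that $w$ is a valuation. The inequality $w(\theta_1+\theta_2)\ge\min\{w(\theta_1),w(\theta_2)\}$ is immediate, since a sum of representatives is again a representative. For multiplicativity, given $g_1,g_2$ with $\deg g_i<n$, write $g_1g_2=hq+g_3$ with $\deg g_3<n$; then $g_1(z)g_2(z)=g_3(z)$ in $L$, and $\deg h<n$ as well, so $g_1,g_2,g_3,h$ are all fixed, with eventual values $c_1,c_2,c_3,c_h$. Evaluating $g_1(a_\rho)g_2(a_\rho)=h(a_\rho)q(a_\rho)+g_3(a_\rho)$: if $c_h+\nu(q(a_\rho))\le c_3$ for all large $\rho$, then, since $\nu(q(a_\rho))$ is strictly increasing, $\nu(g_1(a_\rho)g_2(a_\rho))$ would equal $c_h+\nu(q(a_\rho))$ and hence be strictly increasing, contradicting $\nu(g_1(a_\rho)g_2(a_\rho))=c_1+c_2$ for $\rho$ large; therefore $c_h+\nu(q(a_\rho))>c_3$ eventually, so $\nu(g_1(a_\rho)g_2(a_\rho))=c_3$ and $w(g_1(z)g_2(z))=c_3=c_1+c_2=w(g_1(z))+w(g_2(z))$. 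I expect this to be the main obstacle: it is precisely here that the minimality of $\deg q$ is used essentially, for it is what makes $h$ (and $g_3$) have fixed value.

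Next comes immediacy. Algebraicity of $L/K$ is clear, and the value group does not grow since $w(g(z))=\nu(g(a_\rho))\in\Gamma$ for $g\ne 0$. For the residue field I would show that if $w(g(z))=0$ then $w(g(z)-g(a_\rho))>0$ for some $\rho$, so that $\overline{g(z)}=\overline{g(a_\rho)}$ lies in the residue field of $K$; here $w(g(z)-g(a_\rho))=\nu(g(a_\sigma)-g(a_\rho))$ for $\sigma$ large, and expanding $g$ at $a_\rho$ gives $g(a_\sigma)-g(a_\rho)=\sum_{i\ge 1}\partial_ig(a_\rho)(a_\sigma-a_\rho)^i$. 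Since $\nu(a_\sigma-a_\rho)=\gamma_\rho$ and each $\partial_ig$, being of degree $<n$, has a fixed eventual value $e_i$, the eventual value of this sum is, for all but finitely many $\rho$, of the form $e_{i_0}+i_0\gamma_\rho$ with $1\le i_0\le\deg g$; were it $\le 0$ (hence $=0$, being also $\ge 0$) for all large $\rho$, the relation $e_{i_0}+i_0\gamma_\rho=0$ would confine $\gamma_\rho$ to finitely many values, against the strict monotonicity of $\{\gamma_\rho\}_{\rho<\lambda}$. Applied to the linear polynomials $x-a_\rho$, the same computation shows $w(z-a_\rho)=\gamma_\rho$, i.e.\ $z$ is a pseudo-limit of $\{a_\rho\}_{\rho<\lambda}$ in $(L,w)$; hence $L/K$ is immediate and algebraic.

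For uniqueness, let $u$ be another root of $q$ and $\mu$ an extension of $\nu$ to $K(u)$ making $u$ a pseudo-limit of $\{a_\rho\}_{\rho<\lambda}$. Irreducibility of $q$ gives a $K$-isomorphism $\phi\colon K(u)\to K(z)$ with $\phi(u)=z$. For $g$ with $\deg g<n$, the polynomial $g$ is fixed, and running the Taylor-expansion estimate of the previous paragraph with $\mu$ and $u$ in place of $w$ and $z$ shows $\mu(g(u))$ equals the common eventual value of $\nu(g(a_\rho))$, which is $w(g(z))=w(\phi(g(u)))$. As the elements $g(u)$ with $\deg g<n$ exhaust $K(u)$, we conclude $w\circ\phi=\mu$, so $\phi$ is the desired value-preserving $K$-isomorphism. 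Throughout, the multiplicativity of $w$ is the one substantial point; the rest is routine manipulation of the pseudo-convergence relations.
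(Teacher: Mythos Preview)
The paper does not give its own proof of this statement: it is quoted verbatim as Theorem~3 of Kaplansky \cite{Kap} and used as background, with no argument supplied. There is therefore nothing in the paper to compare your attempt against.

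That said, your proposal is essentially a correct reconstruction of Kaplansky's original proof. The irreducibility of $q$, the well-definedness of $w$, and the ultrametric inequality are handled as in \cite{Kap}. Your multiplicativity argument is the standard one and is sound (you might tighten the phrasing ``$c_h+\nu(q(a_\rho))\le c_3$ for all large $\rho$'': since $\nu(q(a_\rho))$ is strictly increasing, equality can occur at most once, so the real dichotomy is eventually $<$ or eventually $>$; your contradiction goes through either way, and the case $h=0$ is trivial). For immediacy of the residue field and for the uniqueness statement, the clean way to organize what you wrote is to observe that for any polynomial $g$ of degree $<n$ the fixed value $c$ of $g$ eventually satisfies $c<\min_{i\ge 1}\{e_i+i\gamma_\rho\}$ (this is exactly what ``$g$ is fixed'' means in light of Lemma~\ref{lemmakaplvalpol}); then in \emph{any} valued extension $(M,\mu)$ of $(K,\nu)$ containing a pseudo-limit $u$, the Taylor expansion $g(u)=g(a_\rho)+\sum_{i\ge1}\partial_ig(a_\rho)(u-a_\rho)^i$ immediately gives $\mu(g(u))=c$. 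This single remark simultaneously yields that $z$ is a pseudo-limit, that the residue class of $g(z)$ agrees with that of $g(a_\rho)$, and that $\mu(g(u))=w(g(z))$ in the uniqueness part, without the separate finiteness-of-values argument you give (which is also correct but slightly more roundabout).
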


For the rest of this paper, let $\{a_\rho\}_{\rho<\lambda}$ be a pseudo-convergent sequence for the valued field $(K,\nu)$, without a limit in $K$. For each $\rho<\lambda$, we denote $\nu_\rho=\nu_{x-a_\rho}$. For a polynomial $f(x)\in K[x]$ and $a\in K$ we consider the Taylor expansion of $f$ at $a$ given by
\[
f(x)=f(a)+\partial_1f(a)(x-a)+\ldots+\partial_nf(a)(x-a)^n.
\]
Assume that $\{a_\rho\}_{\rho<\lambda}$ fixes the value of the polynomials $\partial_if(x)$ for every $1\leq i\leq n$. We denote by $\beta_i$ this fixed value.
\begin{Lema}[Lemma 8 of \cite{Kap}]\label{lemmakaplvalpol}
There is an integer $h$, which is a power of $p$, such that for sufficiently large $\rho$
\[
\beta_i+i\gamma_\rho>\beta_h+h\gamma_\rho\mbox{ whenever }i\ne h\mbox{ and } \nu(f(a_\rho))=\beta_h+h\gamma_\rho.
\]
\end{Lema}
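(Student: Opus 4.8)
The plan is to analyze the Taylor expansion $f(a_\rho) = \sum_{i=0}^{n} \partial_i f(x)\big|_{x=a_\rho}\,(a_\rho - a_\rho)^i$ — more precisely, since we want to compare $f(a_\rho)$ with $f(a_\sigma)$ for $\sigma > \rho$, I would expand $f$ at $a_\rho$ and evaluate at $a_\sigma$:
\[
f(a_\sigma) = f(a_\rho) + \sum_{i=1}^{n} \partial_i f(a_\rho)(a_\sigma - a_\rho)^i .
\]
By hypothesis $\{a_\rho\}$ fixes the value of each $\partial_i f$, so $\nu(\partial_i f(a_\rho)) = \beta_i$ for all sufficiently large $\rho$; and $\nu(a_\sigma - a_\rho) = \gamma_\rho$ for $\rho < \sigma$. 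Hence the $i$-th term on the right has value exactly $\beta_i + i\gamma_\rho$. The first step is therefore to define, for $\rho$ large enough that all the values $\beta_i$ are already attained, the index $h = h(\rho)$ to be the smallest $i \in \{1,\dots,n\}$ minimizing $\beta_i + i\gamma_\rho$, so that the non-constant part of the expansion has value $\beta_h + h\gamma_\rho$, and the minimum among the strictly-larger-index terms is strictly larger — except possibly for ties, which I handle next.

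The second step is to argue that $h(\rho)$ is eventually independent of $\rho$ and that ties can be excluded. Since $\{\gamma_\rho\}$ is strictly increasing, for indices $i < j$ the quantity $\beta_i + i\gamma_\rho - (\beta_j + j\gamma_\rho) = (\beta_i - \beta_j) + (i-j)\gamma_\rho$ is strictly decreasing in $\rho$; so once $\beta_i + i\gamma_\rho \le \beta_j + j\gamma_\rho$ holds it holds for all larger $\rho$, and if it ever holds strictly it continues to hold strictly. As there are only finitely many pairs $(i,j)$, for $\rho$ large enough the relative order of all the values $\beta_i + i\gamma_\rho$ stabilizes; call the stable minimizer $h$, chosen smallest in case of a persistent tie. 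I then claim $h$ is a power of $p$: if not, write $h = p^t r$ with $r > 1$ prime to $p$, set $h' = h - p^t$, and use the identity $\binom{h}{p^t}\partial_h = \partial_{p^t}\circ\partial_{h'}$ together with $\nu\binom{h}{p^t} = 0$ (Lemma 6 of \cite{Kap}) to show $\beta_{h'} + p^t\gamma_\rho \ge \beta_h + \text{(something forcing a smaller value)}$; concretely, applying the known behaviour of $\partial_{p^t}$ on $\partial_{h'}f$ gives $\nu(\partial_{h'}f(a_\rho)) - \nu(\partial_h f(a_\rho)) \le p^t\gamma_\rho$ up to strict inequality, whence $\beta_{h'} + h'\gamma_\rho \le \beta_h + h\gamma_\rho$, contradicting minimality of $h$ unless $h' \ge h$, i.e. unless $p^t = h$. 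This mirrors exactly the argument in Proposition \ref{propaboutpseudkeyool}(i).

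The third step just reads off the conclusion: for $\rho$ large enough (past the stabilization point) and whenever $\nu(f(a_\rho)) = \beta_h + h\gamma_\rho$ — i.e.\ whenever the constant term $f(a_\rho)$ does not strictly dominate — the strict inequality $\beta_i + i\gamma_\rho > \beta_h + h\gamma_\rho$ holds for every $i \ne h$ with $1 \le i \le n$ by the stabilized ordering and the choice of $h$ as the smallest minimizer. The main obstacle is the bookkeeping around ties and the case $i < h$: one must be careful that choosing $h$ minimal among minimizers genuinely forces strict inequality for all $i \ne h$ at large $\rho$, which relies on the strict monotonicity of $i \mapsto \beta_i + i\gamma_\rho$-differences in $\rho$ noted above; and one must separately confirm that the constant term $f(a_\rho)$, which the lemma's hypothesis $\nu(f(a_\rho)) = \beta_h + h\gamma_\rho$ explicitly controls, does not interfere. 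Everything else is the finite-stabilization argument plus the $p$-power divisibility trick already used in this paper.
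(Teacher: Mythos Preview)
The paper does not give its own proof of this lemma: it is quoted verbatim as Lemma~8 of \cite{Kap} and used as a black box in the proof of Corollary~\ref{correlanurhowithnu}. So there is no ``paper's proof'' to compare against; one can only compare your outline with Kaplansky's original argument.

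Your outline follows Kaplansky's strategy (finite stabilization of the minimizing index, followed by the $p$-power divisibility trick), but two points deserve tightening. First, a small misreading: the clause ``and $\nu(f(a_\rho))=\beta_h+h\gamma_\rho$'' in the statement is a \emph{conclusion}, not a hypothesis --- Kaplansky's lemma is stated under the assumption that $\{a_\rho\}$ does not fix the value of $f$, and asserts both the strict inequalities and the displayed equality. Your third step treats it as a side condition, which slightly misses the point and leaves the equality unproved. Second, and more substantively, your power-of-$p$ step is incomplete. You invoke the identity $\binom{h}{p^t}\partial_h=\partial_{p^t}\circ\partial_{h'}$ and claim the inequality $\beta_{h'}-\beta_h\le p^t\gamma_\rho$ follows from ``the known behaviour of $\partial_{p^t}$ on $\partial_{h'}f$''. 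In Proposition~\ref{propaboutpseudkeyool}(i) that inequality came from $\epsilon(\partial_{b'}Q)<\epsilon(Q)$, which in turn used that $Q$ is a key polynomial; here there is no key-polynomial hypothesis, so you cannot simply transplant that argument. Kaplansky obtains the inequality by \emph{induction on $\deg f$}: the polynomial $g=\partial_{h'}f$ has smaller degree, its value is fixed at $\beta_{h'}$, and the inductive hypothesis (applied to $g$) yields $\beta_{h'}\le\nu(\partial_{p^t}g(a_\rho))+p^t\gamma_\rho=\beta_h+p^t\gamma_\rho$, contradicting $\beta_{h'}+h'\gamma_\rho>\beta_h+h\gamma_\rho$. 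Your sketch never sets up this induction, so as written the key inequality is asserted rather than proved.

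The stabilization part is fine, though you can simplify: since $\gamma_\rho$ is strictly increasing, two affine functions $\rho\mapsto\beta_i+i\gamma_\rho$ with $i\ne j$ can agree for at most one value of $\gamma_\rho$, so persistent ties are impossible and the eventual minimizer is automatically unique.
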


\begin{Cor}\label{correlanurhowithnu}
If $\{a_\rho\}_{\rho<\lambda}$ fixes the value of $f(x)$, then $\nu_\rho(f(x))=\nu(f(x))$. On the other hand, if $\{a_\rho\}_{\rho<\lambda}$ does not fix the value of $f(x)$, then $\nu_\rho(f(x))<\nu(f(x))$ for every $\rho<\lambda$.
\end{Cor}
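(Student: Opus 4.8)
The whole argument turns on one identity: for any $a\in K$ the Taylor expansion $f(x)=\sum_{i=0}^{n}\partial_i f(a)(x-a)^i$ (where $n=\deg f$ and $\partial_0 f=f$) is exactly the $(x-a)$-standard expansion of $f$, since every $\partial_i f(a)$ lies in $K$. Using that $x$ is a pseudo-limit of $\{a_\rho\}_{\rho<\lambda}$, so that $\nu(x-a_\rho)=\gamma_\rho$, this gives $\nu_\rho(f)=\min_{0\le i\le n}\{\nu(\partial_i f(a_\rho))+i\gamma_\rho\}$, while the ultrametric inequality applied to the same sum gives $\nu(f)=\nu(f(x))\ge\nu_\rho(f)$. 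Thus $\nu_\rho(f)\le\nu(f)$ holds for every $\rho$ and every $f$, and it is an equality as soon as the minimum defining $\nu_\rho(f)$ is attained at a single index, because then the corresponding term of the expansion cannot cancel. So the whole task is to decide which index attains the minimum.

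I would handle the non-fixed case first, as it is the cleaner one. Suppose $\{a_\rho\}$ does not fix the value of $f$, and fix $\rho<\lambda$. Substituting $x=a_\sigma$ for $\sigma>\rho$ into the Taylor expansion of $f$ at $a_\rho$ and using $\nu(a_\sigma-a_\rho)=\gamma_\rho$ gives $\nu(f(a_\sigma))\ge\nu_\rho(f)$. On the other hand, Lemma \ref{lemmakaplvalpol} provides an $h$, a power of $p$, for which, at all sufficiently large $\sigma$, the term $\nu(\partial_h f(a_\sigma))+h\gamma_\sigma$ is strictly the smallest among the $\nu(\partial_i f(a_\sigma))+i\gamma_\sigma$ with $i\ge 1$, and equals $\nu(f(a_\sigma))$; expanding $f(x)$ at $a_\sigma$ and using the ultrametric inequality then yields $\nu(f(x))\ge\nu(f(a_\sigma))$. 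Since the value of $f$ is not fixed, $\{\nu(f(a_\sigma))\}$ is strictly increasing for large $\sigma$, whereas $\nu(f(x))$ is one element of the value group; hence $\nu(f(a_\sigma))<\nu(f(x))$ for all large $\sigma$. Chaining the inequalities, $\nu_\rho(f)\le\nu(f(a_\sigma))<\nu(f(x))=\nu(f)$ once $\sigma$ is large enough and exceeds $\rho$, which is the assertion.

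For the fixed case, write $v$ for the eventual value of $\nu(f(a_\rho))$; the goal is $\nu_\rho(f)=\nu(f)=v$ for all sufficiently large $\rho$ (one should not expect this for every $\rho$: already $f=(x-a_0)(x-a_1)$ has $\nu_0(f)=2\gamma_0<\gamma_0+\gamma_1=\nu(f)$). By the formula for $\nu_\rho(f)$ it is enough to show that for large $\rho$ the constant term $v$ is strictly smaller than every $\nu(\partial_i f(a_\rho))+i\gamma_\rho$ with $i\ge 1$; then the minimum is unique and the equality follows from the first paragraph. Otherwise some term with $i\ge1$ would stay $\le v$ cofinally, so $m_\rho:=\min_{i\ge1}\{\nu(\partial_i f(a_\rho))+i\gamma_\rho\}$ would be $\le v$ for all large $\rho$; but $m_\rho$ is strictly increasing (each term is, since $\gamma_\rho$ is strictly increasing and the derivative values are eventually non-decreasing), hence $<v$ for all large $\rho$, while expanding $f(a_{\rho+1})-f(a_\rho)=\sum_{i\ge1}\partial_i f(a_\rho)(a_{\rho+1}-a_\rho)^i$ and invoking Lemma \ref{lemmakaplvalpol} to pin down the surviving power-of-$p$ index gives $\nu(f(a_{\rho+1})-f(a_\rho))=m_\rho$, whereas the left side is at least $\min\{\nu(f(a_{\rho+1})),\nu(f(a_\rho))\}=v$. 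This contradiction establishes the claim.

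The main obstacle is the bookkeeping around Lemma \ref{lemmakaplvalpol}, whose hypothesis is that the values of all the derivatives $\partial_i f$, $1\le i\le n$, are fixed by $\{a_\rho\}$, so one first needs a reduction to that situation. If $\{a_\rho\}$ is of transcendental type every polynomial has fixed value and there is nothing to do; if it is of algebraic type the corollary is applied with $\deg f\le\deg q$ (cf.\ Theorem \ref{compthemkppsc} and Remark \ref{exlinearnonlinepolarepsk}), and then each $\partial_i f$ with $i\ge1$ has degree $<\deg q$ and hence fixed value, so the hypothesis is met. Beyond that one must keep the asymmetry of the two statements in mind — ``strictly less at every $\rho$'' in the non-fixed case against ``equality at all sufficiently large $\rho$'' in the fixed case — and, in the fixed case, pass to $\rho$ large enough that $\nu(f(a_\rho))$ has stabilized and that the dichotomy of conditions (\ref{condforpscstotra}) and (\ref{condforpscstoalg}) has taken effect for $f$ and for all its derivatives. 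The remaining ingredients — the Taylor/standard-expansion identity, the ultrametric estimates, and the strict monotonicity of $\{\gamma_\rho\}$ — are routine.
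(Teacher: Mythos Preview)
Your argument is correct and rests on the same two ingredients as the paper's proof: the identification of the Taylor expansion at $a_\rho$ with the $(x-a_\rho)$-standard expansion, and Lemma~\ref{lemmakaplvalpol}. The paper is much terser: it derives the single identity $\nu_\rho(f)=\nu(f(a_\rho))$ for large $\rho$ directly from the lemma and then reads off both conclusions, whereas you bypass that identity and argue each case by explicit ultrametric estimates. Your route is longer but buys something the paper glosses over: in the non-fixed case the paper's identity $\nu_\rho(f)=\nu(f(a_\rho))$ is only established for large $\rho$, yet the conclusion is asserted for \emph{every} $\rho$; your chain $\nu_\rho(f)\le\nu(f(a_\sigma))<\nu(f)$ (with $\sigma>\rho$ large) cleanly delivers the quantifier as stated. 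You also correctly flag that the fixed-case equality can only be expected for large $\rho$, and that the hypothesis of Lemma~\ref{lemmakaplvalpol} (fixed values of the $\partial_i f$) is a standing assumption in this section rather than automatic. One cosmetic point: in the fixed case your use of the lemma relies only on the uniqueness of the minimizing index $h$ among $i\ge1$, not on the clause $\nu(f(a_\rho))=\beta_h+h\gamma_\rho$, which is the part specific to the non-fixed situation; it would be worth saying so explicitly.
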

\begin{proof}
By definition of $\nu_\rho$ we have
\[
\nu_\rho(f(x))=\min_{0\leq i\leq n}\{\nu(\partial_if(a_\rho)(x-a_\rho)^i)\}=\min_{0\leq i\leq n}\{\beta_i+i\gamma_\rho\},
\]
where $\beta_0:=\nu(f(a_\rho))$. This implies, using the lemma above, that
\[
\nu_\rho(f(x))=\nu(f(a_\rho)).
\]
If $\{a_\rho\}_{\rho<\lambda}$ fixes the value of $f(x)$, then $\nu(f(a_\rho))=\nu(f(x))$ for $\rho$ sufficiently large. Thus $\nu_\rho(f(x))=\nu(f(x))$. On the other hand, if $\{a_\rho\}_{\rho<\lambda}$ does not fix the value of $f(x)$, then $\nu(f(x))>\nu(f(a_\rho))=\nu_\rho(f(x))$ for every $\rho<\lambda$.
\end{proof}

\begin{proof}[Proof of Theorem \ref{compthemkppsc}]
If $\{a_\rho\}_{\rho<\lambda}$ is of transcendental type it fixes, for any polynomial $f(x)\in K[x]$, the values of the polynomials
$\partial_if(x)$ for every $0\leq i\leq n$ (here $\partial_0f:=f$). Hence, Corollary \ref{correlanurhowithnu} implies that
$\nu_\rho(f(x))=\nu(f(x))$ for sufficiently large $\rho<\lambda$, which is what we wanted to prove.

Now assume that $\{a_\rho\}_{\rho<\lambda}$ is of algebraic type. Take $\rho<\lambda$ such that
$$
\nu(q(a_\tau))>\nu(q(a_\sigma))
$$
for every $\rho<\sigma<\tau<\lambda$ and set $Q_-=x-a_\rho$. Then
\[
\nu_{Q_-}(x-a_\sigma)=\nu_{Q_-}(x-a_\rho+a_\rho-a_\sigma)=\nu(x-a_\rho)<\nu(x-a_\sigma)
\]
for every $\rho<\sigma<\lambda$. This implies that $\alpha(Q-)=1$ and then $\alpha(Q_-)=\deg (Q_-)$. Consequently, \textbf{(K1)} is satisfied. Moreover,
\[
\Psi(Q_-)=\{x-a\mid \nu_{Q_-}(x-a)<\nu(x-a)\}.
\]
In order to prove \textbf{(K2)} assume, aiming for a contradiction, that $\nu(\Psi(Q_-))$ has a maximum, let us say $\nu(x-a)$. Then, in particular, $\nu(x-a)>\nu(x-a_\sigma)$ for every $\rho<\sigma<\lambda$. This implies that $a\in K$ is a limit of  $\{a_\rho\}_{\rho<\lambda}$, which is a contradiction. Condition \textbf{(K3)} and \textbf{(K4)} follow immediately from Corollary \ref{correlanurhowithnu} and the fact that $\{\nu(x-a_\rho)\mid \rho<\lambda\}$ is cofinal in $\nu(\Psi(Q_-))$.
\end{proof}


\begin{thebibliography}{99}

\bibitem{Kap} I. Kaplansky, \textit{Maximal fields with valuations I}, Duke Math. Journ. \textbf{9}
(1942), 303--321.

\bibitem{KK_1} H. Knaf e F.-V. Kuhlmann, \textit{Every place admits local uniformization in a finite extension of the function field}, Adv. Math. \textbf{221} no. 2 (2009), 428--453.

\bibitem{Mac} S. MacLane, \textit{A construction for absolute values in polynomial rings}, Trans. Amer. Math. Soc. \textbf{40} (1936), 363 -- 395.

\bibitem{Mahboud} W. Mahboub, \textit{Key Polynomials}, J. Pure Appl. Algebra \textbf{217} (2013), 989--1006.

\bibitem{HOS} F. J. Herrera Govantes, W. Mahboub, M. A. Olalla Acosta and M. Spivakovsky, \textit{Key polynomials for simple extensions of valued fields}, preprint, arXiv:1406.0657, 2016.

\bibitem{JCSS} J.-C. San Saturnino, \textit{Defect of an extension, key polynomials and local uniformization}, preprint, arXiv:1412.7697, 2014.


\bibitem{Spi1} M. Spivakovsky, \textit{Valuations in function fields of surfaces}, Amer. J. Math. \textbf{112} no. 1 (1990), 107--156.

\bibitem{Tei} B. Teissier, \textit{Valuations, deformations, and toric geometry}, Valuation theory and its applications, Vol. II (Saskatoon, SK, 1999), 361 -- 459, Fields Inst. Commun. \textbf{33}, (2003).

\bibitem{Vaq} M. Vaquie, \textit{Extension d'une valuation}, Trans. Amer. Math. Soc. \textbf{359} no. 7 (2007), 3439 -- 3481.

\end{thebibliography}
\end{document}